\theoremstyle{plain}
\newtheorem{theorem}{Theorem}[section]
\theoremstyle{definition}
\newtheorem{example}[theorem]{Example}
\theoremstyle{remark}
\newtheorem{remark}[theorem]{Remark}
\numberwithin{equation}{section}
\newcommand{\Norm}[1]{{\left\vert\kern-0.25ex\left\vert\kern-0.25ex\left\vert #1 
    \right\vert\kern-0.25ex\right\vert\kern-0.25ex\right\vert}}
\begin{document}
\author{Lavanya V Salian$^{\S}$, Samala Rathan$^{\S}$ and Rakesh Kumar$^\dagger$}
\title{Compact finite-difference scheme for some Sobolev type equations with Dirichlet boundary conditions}
\thanks{
$^{\S}$Department of Humanities and Sciences, Indian Institute of Petroleum and Energy-Visakhapatnam,
India-530003. (lavanya\_vs@iipe.ac.in,\,rathans.math@iipe.ac.in)
\newline
$^\dagger$School of Data Science, 
Indian Institute of Science Education and Research
Thiruvananthapuram, 
Thiruvananthapuram, India-695551, \& Department of Mathematics, École Centrale School of Engineering, Mahindra University, Hyderabad, 500043, Telangana, India.(rakeshmath21@iisertvm.ac.in, rakesh.kumar@mahindrauniversity.edu.in)}
\date{\today}
 \maketitle
\begin{abstract}
This study aims to construct a stable, high-order compact finite difference method for solving Sobolev-type equations with Dirichlet boundary conditions. Approximation of higher-order mixed derivatives in some specific Sobolev-type equations requires a bigger stencil information.  One can approximate such derivatives on compact stencils, which are higher-order accurate and take less stencil information but are implicit and sparse. Spatial derivatives in this work are approximated using the sixth-order compact finite difference method (Compact6), while temporal derivatives are handled with the explicit forward Euler difference scheme. We examine the accuracy and convergence behavior of the proposed scheme.  Using the von Neumann stability analysis, we establish $L_2-$stability theory for the linear case. We derive conditions under which fully discrete schemes are stable. Also, the amplification factor $\mathcal{C}(\theta)$ is analyzed to ensure the decay property over time. Real parts of $\mathcal{C}(\theta)$ lying on the negative real axis confirm the exponential decay of the solution. A series of numerical experiments were performed to verify the effectiveness of the proposed scheme. These tests include both one-dimensional and two-dimensional cases of advection-free and advection-diffusion flows. They also cover applications to the equal width equation, such as the propagation of a single solitary wave, interactions between two and three solitary waves, undular bore formation, and the Benjamin–Bona–Mahony–Burgers equation.
\end{abstract}
\bigskip
\noindent 
\textbf{AMS Classification:} 65M06, 65M12.
\newline
\noindent
\textbf{Keywords:}  High-order compact scheme,  Wave propagation, Stability, Equal-Width equation, Benjamin–Bona–Mahony–Burgers equation.

\section{\bf Introduction} 
\label{sec:1}
Sobolev-type equations are partial differential equations (PDEs) that feature third-order mixed time and space derivatives. They describe wave motion in media with nonlinear wave steepening, dispersion, and diffusion, making them essential for various scientific and engineering applications. These equations are widely used to model physical phenomena such as moisture migration in soil \cite{shi1990initial}, fluid flow through fractured rock \cite{barenblatt1960basic}, shear in second-order fluids \cite{ting1974cooling}, consolidation of clay \cite{taylor1942research}, and processes in semiconductors \cite{gao2017weak}, providing valuable insights across disciplines including hydrodynamics \cite{gao2015modified}  and thermodynamics \cite{ewing1977coupled}.
\par
In this paper, we focus on a high-order compact finite difference scheme for solving a class of Sobolev-type equations with a Burgers-type nonlinear term, given by
\begin{equation} 
u_t +  f(u)_x -\gamma u_{xx}  - \delta u_{xxt} = \mathrm{g}(x,t), \quad x \in \Omega \subseteq \mathbb{R} , \quad t \in (0, T],
\label{eqn:A}
\end{equation}
with initial condition
\begin{equation} \label{eqn:A_IC}
u(x,0) = \omega (x), \quad x \in \Omega,
\end{equation}
and Dirichlet boundary conditions. Here, $f(u)$ represents the nonlinear function, $\mathrm{g}$ is a source term, and $\gamma,\,\, \delta \geq 0$ are constants. Equation (\ref{eqn:A}) represents various classes of linear and nonlinear PDEs. The study will primarily concentrate on the Equal Width (EW) and the Benjamin–Bona–Mahony–Burgers (BBMB) equations.

\par The EW equation, introduced by Morrison et al. \cite{morrison1984scattering}, is a mathematical model that describes one-dimensional wave propagation within nonlinear media, containing dispersive effects, is given by
\begin{equation}\label{eqn:EW}
       u_t + u u_x  = \delta u_{xxt}, \quad x \in \Omega, \quad t \in (0, T].
\end{equation}
It is presented as an alternative to both the Regularized Long Wave (RLW) and Korteweg-de Vries (KdV) equations, which are also known to model nonlinear dispersive wave phenomena featuring solitary waves that preserve their shape and speed post-interaction. To simulate physical boundary conditions, the EW equation is equipped with the condition $u = 0 $ at $ x = x_0 $ and $ x = x_N $, signifying that $ u \to 0 $ as $ x \to \pm \infty $, with boundaries positioned sufficiently far apart initially. This PDE is termed the equal width equation due to the fact that, given a specific value of the parameter $ \delta $, solitary wave solutions exhibit a consistent width or wavelength across different wave amplitudes. A bore arises when a deeper flow of water moves into a still-water region, particularly when the depth transition has a gentle slope. Experimental findings suggest that if the depth change ratio is below 0.28, the resulting bore exhibits undular characteristics. However, when this ratio exceeds 0.28, at least one wave crest in the bore begins to break \cite{peregrine1966calculations}. Analytical solutions to the EW equation exist but are confined to specific initial and boundary conditions \cite{hamdi2003exact}. However, several numerical methods have been developed to study it, including those found in \cite{gardner1997simulations, esen2005numerical, saka2006finite, kumar2016b, kumar2020modified, salian2024exponential, salian2024novel}.
\par
The EW equation omits dissipation, but it is essential to examine how numerical methods handle dissipative effects alongside nonlinearity and dispersion. Thus, one can consider the inhomogeneous BBMB equation:
\begin{equation}\label{eqn:BBMB}
    u_t + (1+u) u_x - \gamma u_{xx} - \delta u_{xxt} = \mathrm{g}(x,t), \quad x \in \Omega, \quad t \in (0, T],
\end{equation}
which incorporates dissipation through the term \( \gamma u_{xx} \). References \cite{bruzon2014weak, bruzon2012conservation, bruzon2016conservation} discuss exact solutions and conservation laws for the BBMB equation. The BBM equation is notable for modeling long-wavelength waves in media like fluids and plasma \cite{abbasbandy2010first}. Several analytical methods, such as the tanh-coth \cite{salas2010new}, Exp-function \cite{ganji2009approximate, noor2011some}, and $ G^{\prime}/G $-expansion techniques \cite{bahrami2011exact}, have been employed to address generalized BBM and BBM-Burgers equations. Research also includes investigations of solution stability, decay rates \cite{qinghua2012degenerate, guo2012optimal, yin2010exponential}, and shock profile behavior in two-dimensional cases \cite{xiao2013nonlinear}. Numerical methods, including homotopy perturbation, variational iteration \cite{tari2007approximate}, meshless method of radial basis functions \cite{dehghan2014numerical}, B-spline methods \cite{kumar2016b, joshi2023numerical, jena2023numerical} and finite difference schemes \cite{achouri2006convergence}, have been used to obtain approximate solutions with verified accuracy.\par

\par The objective of this study is to construct a stable, high-order compact finite difference method for solving Sobolev-type equations of the form (\ref{eqn:A}) with  Dirichlet boundary conditions. Approximation of higher-order mixed derivatives in some specific Sobolev-type equations requires a bigger stencil information.  One can approximate such derivatives on compact stencils, which are higher-order accurate and take less stencil information but are implicit and sparse. Spatial derivatives in (\ref{eqn:A}) in this work are approximated using the sixth-order compact finite difference method (Compact6), while temporal derivatives are handled with the explicit forward Euler difference scheme. We examine the accuracy and convergence behavior of the proposed scheme. Using the von Neumann stability analysis, we establish $L_2-$stability theory for the linear case. We derive conditions under which fully discrete schemes ( Euler forward time-stepping combined with Compact6 spatial discretization) are stable. Also, the amplification factor $\mathcal{C}(\theta)$ is analyzed to ensure the decay property over time. Real parts of $\mathcal{C}(\theta)$ lying on the negative real axis confirm the exponential decay of the solution. A series of numerical experiments were performed to verify the effectiveness of the proposed scheme. These tests include both one-dimensional and two-dimensional cases of advection-free and advection-diffusion flows. They also cover applications to the equal width equation, such as the propagation of a single solitary wave, interactions between two and three solitary waves, undular bore formation, and the Benjamin–Bona–Mahony–Burgers equation. These cases were examined to confirm the theoretical findings and assess the scheme’s accuracy and stability.

We organize the paper as follows: In Section \ref{sec:2}, we provide a compact sixth-order finite-difference approximation to spatial first and second-order derivatives involved in Sobolev-type equations in semi- and fully discrete formulations along with the implementation of Dirichlet boundary conditions. In Section \ref{sec:3} we establish certain conditions to prove the stability of the proposed scheme in the linear case, and hence we prove the solution is bounded in semi- and fully-discrete formulations. In Section \ref{sec:4}, we present numerical results to some examples in one and two-dimensions to validate the theoretical results. Finally, we provide some conclusions in Section \ref{sec:5}.

\section{Compact sixth order scheme (Compact6)}\label{sec:2}
In this section, we aim to develop a numerical scheme with sixth-order accuracy in space and first-order accuracy in time to approximate the solution of Eqn. (\ref{eqn:A}). We define the spatial domain $ \Omega = \{ x \mid a \leq x \leq b \} $, where $ a $ and $ b $ are constants, and discretize it into $ N $ equally spaced intervals so that $ x_j = a + j h $ for $ j = 1, 2, \ldots, N $, where $ h = \dfrac{b-a}{N} $ represents the spatial step size. In the temporal direction, we divide the interval $ [0, T] $ into $ M $ uniform time steps, each of size $ \tau = \dfrac{T}{M} $. The discrete-time levels are then given by $ t^n = n \tau $, where $ n $ is the time index. Let $ u_j^n $ be the approximate solution at $ (x_j, t^n) $. Define $ u^{\prime}_j $ and $ u^{\prime\prime}_j $ as approximations to derivatives $ \dfrac{\partial u}{\partial x} $ and $ \dfrac{\partial^2 u}{\partial x^2} $ at $ x_j $, respectively.
\par
Consider the one-dimensional Sobolev-type equations (\ref{eqn:A}) in the quasi-linear form
\begin{equation}
\label{eqn:Sobolev}
    u_t+f^{\prime}(u) u_x -\gamma u_{xx} - \delta u_{xxt} = \mathrm{g}(x,t).
\end{equation}
To approximate the first and second derivatives, $ u_x $ and $ u_{xx} $, at each grid point, we apply a sixth-order compact finite difference scheme on a linear cell-node grid as presented in \cite{lele1992compact}. For the first derivative, the formulation is as follows:
\begin{equation}
\label{eqn:1a}
\begin{split}
  \dfrac{1}{3} u^{\prime}_{j-1} + u^{\prime}_{j} + \dfrac{1}{3} u^{\prime}_{j+1} &= \dfrac{14}{9} \dfrac{u_{j+1} - u_{j-1}}{2h} + \dfrac{1}{9} \dfrac{u_{j+2} - u_{j-2}}{4h}, \quad 1 \leq j \leq N,\\
  &= \dfrac{1}{h} \left( -\dfrac{1}{36}u_{j-2} -\dfrac{7}{9}u_{j-1}+\dfrac{7}{9}u_{j+1}+\dfrac{1}{36}u_{j+2}\right).
\end{split}
\end{equation}
The local truncation error of the scheme is $\dfrac{-1}{1260} u_j^{(7)}(x) h^6 + \mathcal{O}(h^{8})$.
The Eqn. (\ref{eqn:1a}) can be represented in matrix form as:
\begin{equation}\label{eqn:FD}
\begin{split}
\mathcal{A}_1 \mathbf{u}^{\prime} &= \dfrac{1}{h} \mathcal{B}_1 \mathbf{u}, \\ \mathbf{u}^{\prime} &= \dfrac{1}{h} \mathcal{D}_1 \mathbf{u},
\end{split}
\end{equation}
where $ \mathbf{u}^{\prime} = (u^{\prime}_1, u^{\prime}_2, \dots, u^{\prime}_N)^T $,\,\, $ \mathbf{u} = (u_1, u_2, \dots, u_N)^T $,\,\, $ \mathcal{D}_1 = \mathcal{A}_1^{-1} \mathcal{B}_1 $ is the first derivative differentiation matrix and $ \mathcal{A}_1,\,\, \mathcal{B}_1 \in \mathbb{R}^{N \times N} $ are tri- and penta-diagonal matrices given by
\begin{equation} 
\mathcal{A}_1 = 
\begin{pmatrix}
1 & \dfrac{1}{3} & 0 & \cdots & \cdots & \cdots & 0 \\[0.3cm]
\dfrac{1}{3} & 1 & \dfrac{1}{3} & \cdots & \cdots & \cdots & 0 \\[0.3cm]
0 & \dfrac{1}{3} & 1 & \cdots & \cdots & \cdots & 0 \\[0.1cm]
\vdots & \vdots & \vdots & \ddots & \vdots & \vdots & \vdots \\[0.3cm]
0 & 0 & 0 & \cdots & \cdots & 1 & \dfrac{1}{3} \\[0.3cm]
0 & 0 & 0 & \cdots & \cdots & \dfrac{1}{3} & 1 \\
\end{pmatrix}
, \quad \mathcal{B}_1 =
\begin{pmatrix}
0 & \dfrac{7}{9} & \dfrac{1}{36} & 0 & \cdots & \cdots & 0 \\[0.3cm]
-\dfrac{7}{9} & 0 & \dfrac{7}{9} & \dfrac{1}{36} & \cdots & \cdots & 0 \\[0.3cm]
-\dfrac{1}{36} & -\dfrac{7}{9} & 0 & \dfrac{7}{9} & \dfrac{1}{36} & \cdots & 0 \\[0.3cm]
0 & -\dfrac{1}{36} & -\dfrac{7}{9} & 0 & \dfrac{7}{9} & \cdots & 0 \\[0.3cm]
\vdots & \vdots & \vdots & \vdots & \ddots & \vdots & \vdots \\[0.3cm]
0 & \cdots & 0 & -\dfrac{1}{36} & -\dfrac{7}{9} & 0 & \dfrac{7}{9}\\[0.3cm]
0 & \cdots & 0 & 0 & -\dfrac{1}{36} & -\dfrac{7}{9} & 0 \\
\end{pmatrix}.
\end{equation}
For the second derivative, we utilize the following expression:
\begin{equation}
\label{eqn:2a}
\begin{split}
 \dfrac{2}{11} u^{\prime\prime}_{j-1} + u^{\prime\prime}_{j} + \dfrac{2}{11} u^{\prime\prime}_{j+1} &= \dfrac{12}{11} \dfrac{u_{j+1} - 2u_{j} + u_{j-1}}{h^2} + \dfrac{3}{11} \dfrac{u_{j+2} - 2u_{j} + u_{j-2}}{4h^2}\\ 
 &= \dfrac{1}{h^2} \left(\dfrac{3}{44}u_{j-2} +\dfrac{12}{11}u_{j-1}-\dfrac{51}{22}u_{j}+\dfrac{12}{11}u_{j+1}+\dfrac{3}{44}u_{j+2} \right).
\end{split}
\end{equation}
The local truncation error of the scheme is $\dfrac{-23}{55440} u_j^{(8)}(x) h^6 + \mathcal{O}(h^{8})$. In matrix form, Eqn. (\ref{eqn:2a}) becomes:
\begin{equation}\label{eqn:SD}
\begin{split}
   \mathcal{A}_2 \mathbf{u^{\prime\prime}} &= \dfrac{1}{h^2} \mathcal{B}_2 \mathbf{u}, \\
   \mathbf{u}^{\prime\prime} &= \dfrac{1}{h^2} \mathcal{D}_2 \mathbf{u}, 
\end{split}
\end{equation}
where $ \mathbf{u}^{\prime\prime} = (u^{\prime\prime}_1, u^{\prime\prime}_2, \dots, u^{\prime\prime}_N)^T $,\,\, $ \mathcal{D}_2 = \mathcal{A}_2^{-1} \mathcal{B}_2 $ is the second derivative differentiation matrix and $ \mathcal{A}_2, \mathcal{B}_2 \in \mathbb{R}^{N \times N} $ are tri- and penta-diagonal matrices given by
\begin{equation} 
\mathcal{A}_2 = 
\begin{pmatrix}
1 & \dfrac{2}{11} & 0 & \cdots & \cdots & \cdots & 0 \\[0.3cm]
\dfrac{2}{11} & 1 & \dfrac{2}{11} & \cdots & \cdots & \cdots & 0 \\[0.3cm]
0 & \dfrac{2}{11} & 1 & \cdots & \cdots & \cdots & 0 \\[0.3cm]
\vdots & \vdots & \vdots & \ddots & \vdots & \vdots & \vdots \\[0.3cm]
0 & 0 & 0 & \cdots & \cdots & 1 & \dfrac{2}{11} \\[0.3cm]
0 & 0 & 0 & \cdots & \cdots & \dfrac{2}{11} & 1 \\
\end{pmatrix}
, \, \mathcal{B}_2 =
\begin{pmatrix}
-\dfrac{51}{22} & \dfrac{12}{11} & \dfrac{3}{44} & 0 & \cdots & \cdots & 0 \\[0.3cm]
\dfrac{12}{11} & -\dfrac{51}{22} & \dfrac{12}{11} & \dfrac{3}{44} & \cdots & \cdots & 0 \\[0.3cm]
\dfrac{3}{44} & \dfrac{12}{11} & -\dfrac{51}{22} & \dfrac{12}{11} & \dfrac{3}{44} & \cdots & 0 \\[0.3cm]
0 & \dfrac{3}{44} & \dfrac{12}{11} & -\dfrac{51}{22} & \dfrac{12}{11} & \cdots & 0 \\[0.3cm]
\vdots & \vdots & \vdots & \vdots & \ddots & \vdots & \vdots \\[0.3cm]
0 & \cdots & 0 & \dfrac{3}{44} & \dfrac{12}{11} & -\dfrac{51}{22} & \dfrac{12}{11} \\[0.3cm]
0 & \cdots & 0 & 0 & \dfrac{3}{44} & \dfrac{12}{11} & -\dfrac{51}{22} \\
\end{pmatrix}.
\end{equation}
The derivatives $ u_x $ and $ u_{xx} $ are approximated at the grid points by the vectors $ \mathbf{u^{\prime}} $ and $ \mathbf{u^{\prime\prime}} $, respectively, as defined by equations (\ref{eqn:FD}) and (\ref{eqn:SD}). With these approximations, equation (\ref{eqn:Sobolev}) can be reformulated as:
\begin{equation}
    \mathbf{u}_t + \text{diag}(f^{\prime}(\mathbf{u})) \mathbf{u}^{\prime} - \gamma \, \mathbf{u}^{\prime\prime} - \delta \, \mathbf{u}^{\prime\prime}_t \approx \mathbf{g}, \quad t > 0,
\end{equation}
where $ f^{\prime}(\mathbf{u}) = (f^{\prime}(u_1), f^{\prime}(u_2), \dots, f^{\prime}(u_N))^T $ and $ \mathbf{g} = (\mathrm{g}(x_1, t), \mathrm{g}(x_2, t), \dots, \mathrm{g}(x_N, t))^T $.
\par
By substituting the discrete forms, we arrive at the expression:
\begin{equation}
    \left( I - \dfrac{\delta}{h^2} \mathcal{D}_2 \right) \mathbf{u}_t = -\dfrac{1}{h} \text{diag}(f^{\prime}(\mathbf{u})) \mathcal{D}_1 \mathbf{u} + \dfrac{\gamma}{h^2} \mathcal{D}_2 \mathbf{u} + \mathbf{g}.
\end{equation}
\par
To advance the solution in time, we apply the forward Euler method for the time derivative, resulting in the matrix equation:
\begin{equation}
\label{eqn:Sobolev2}
    \left( I - \dfrac{\delta}{h^2} \mathcal{D}_2 \right) \mathbf{U}^{n+1} = \left( I - \dfrac{\tau}{h} \text{diag}(f^{\prime}(\mathbf{U}^n)) \mathcal{D}_1 + \dfrac{\tau \gamma- \delta}{h^2} \mathcal{D}_2 \right) \mathbf{U}^n + \tau \, \mathbf{g}^n,
\end{equation}
where $ \mathbf{U}^n = (u_1^n, u_2^n, \dots, u_N^n)^T $ represents the solution vector at time level $ t = n \tau $.
\par
At the initial time level, $ n = 0 $, the vector $ \mathbf{U}^0 $ is determined from the initial condition given in equation (\ref{eqn:A_IC}). To obtain $ \mathbf{U}^{n+1} $ at time $ t = (n+1) \tau $, we solve the linear system in equation (\ref{eqn:Sobolev2}). Since $ \mathbf{U}^n $ at time $ t = n \tau $ is already known, the right-hand side of the equation (\ref{eqn:Sobolev2}) can be computed directly, allowing the next time step solution vector $ \mathbf{U}^{n+1} $ to be obtained. Using the Taylor expansion in (\ref{eqn:Sobolev2}), we get the following.

\begin{theorem}\label{eqn:Trun_Err}
Assume that the solution $ u $ of (\ref{eqn:Sobolev}) belongs to the space $ C^8_2((a, b) \times [0, T]) $. Then, as the time step $ \tau \to 0 $ and the spatial step $ h \to 0 $, the Compact6 method (\ref{eqn:Sobolev2})  achieves a local truncation error of order $ \mathcal{O}(\tau + h^6) $.
\end{theorem}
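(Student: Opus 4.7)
My plan is to obtain the bound by substituting the exact smooth solution $u(x_j,t^n)$ into the fully discrete scheme (\ref{eqn:Sobolev2}) and then expanding in Taylor series separately in time and space.

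First, I rewrite (\ref{eqn:Sobolev2}) in the equivalent residual form
\begin{equation*}
\bigl(I-\tfrac{\delta}{h^{2}}\mathcal{D}_{2}\bigr)\tfrac{\mathbf{U}^{n+1}-\mathbf{U}^{n}}{\tau}
\;+\;\mathrm{diag}(f'(\mathbf{U}^{n}))\,\tfrac{1}{h}\mathcal{D}_{1}\mathbf{U}^{n}
\;-\;\tfrac{\gamma}{h^{2}}\mathcal{D}_{2}\mathbf{U}^{n}
\;=\;\mathbf{g}^{n},
\end{equation*}
so that the local truncation error $T_{j}^{n}$ is the pointwise defect when $\mathbf{U}^{n}$ is replaced by the nodal values of the exact solution $u$. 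This cleanly separates the time-stepping contribution from the spatial-operator contribution, which is how I will estimate the two sources of error.

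For the temporal part, Taylor expansion of $u(x_j,t^{n+1})$ about $t^{n}$ (using $u\in C^{8}_{2}$ so that $u_{tt}$ is bounded) gives
\begin{equation*}
\tfrac{u(x_j,t^{n+1})-u(x_j,t^{n})}{\tau}=u_{t}(x_j,t^{n})+\tfrac{\tau}{2}u_{tt}(x_j,\xi_j^n),
\end{equation*}
so the forward-Euler step contributes an $\mathcal{O}(\tau)$ residual. Because this increment is then acted on by the bounded operator $(I-\tfrac{\delta}{h^{2}}\mathcal{D}_{2})$ applied to the smooth $u_{t}$, and because $\tfrac{1}{h^{2}}\mathcal{D}_{2}u_{t}=u_{xxt}+\mathcal{O}(h^{6})$ (see below), the mixed derivative $\delta u_{xxt}$ in (\ref{eqn:Sobolev}) is recovered up to an $\mathcal{O}(\tau+h^{6})$ error.

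For the spatial part, I invoke the Compact6 local truncation errors stated right after (\ref{eqn:1a}) and (\ref{eqn:2a}): Taylor-expanding both sides of the defining relations shows
\begin{equation*}
\mathcal{A}_{1}u'_{j}-\tfrac{1}{h}(\mathcal{B}_{1}u)_{j}=-\tfrac{1}{1260}u^{(7)}(x_j)h^{6}+\mathcal{O}(h^{8}),\qquad
\mathcal{A}_{2}u''_{j}-\tfrac{1}{h^{2}}(\mathcal{B}_{2}u)_{j}=-\tfrac{23}{55440}u^{(8)}(x_j)h^{6}+\mathcal{O}(h^{8}).
\end{equation*}
Since $\mathcal{A}_{1}$ and $\mathcal{A}_{2}$ are strictly diagonally dominant tridiagonal matrices with row sums uniformly bounded away from zero, $\|\mathcal{A}_{1}^{-1}\|_{\infty}$ and $\|\mathcal{A}_{2}^{-1}\|_{\infty}$ are bounded independently of $h$, so that $\tfrac{1}{h}\mathcal{D}_{1}u=u_{x}+\mathcal{O}(h^{6})$ and $\tfrac{1}{h^{2}}\mathcal{D}_{2}u=u_{xx}+\mathcal{O}(h^{6})$ pointwise, uniformly on the grid.

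Combining the two pieces in the residual form and using the assumed smoothness $u\in C^{8}_{2}$, the truncation error at interior nodes reads $T_{j}^{n}=\mathcal{O}(\tau)+\mathcal{O}(h^{6})$. I expect the main technical care to go into the mixed-derivative term $\delta u_{xxt}$: its discretization couples the forward-Euler increment with $\mathcal{D}_{2}/h^{2}$, so one must verify that the $\mathcal{O}(\tau)$ temporal remainder, once acted on by the bounded discrete operator, does not inflate, and that the spatial $\mathcal{O}(h^{6})$ error of $\mathcal{D}_{2}$ survives differentiation in time (which is why $u_{tt}$ and $u_{xxt}$ must lie in $C_b$, supplied by the $C^{8}_{2}$ assumption). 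Once this is checked, additivity of the two error sources yields the claimed $\mathcal{O}(\tau+h^{6})$ bound.
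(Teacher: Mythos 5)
Your proposal is correct and follows the same route the paper indicates: the paper's entire justification is the remark that the theorem follows "using the Taylor expansion in (\ref{eqn:Sobolev2})", i.e., substituting the exact solution, Taylor-expanding the forward-Euler quotient in time for the $\mathcal{O}(\tau)$ part, and invoking the stated $\mathcal{O}(h^6)$ truncation errors of the compact formulas (\ref{eqn:1a}) and (\ref{eqn:2a}) for the spatial part. Your additional observations — the uniform boundedness of $\mathcal{A}_1^{-1}$, $\mathcal{A}_2^{-1}$ by diagonal dominance, and the care needed for the mixed term $\delta u_{xxt}$ — fill in details the paper omits rather than departing from its argument.
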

\noindent{\textbf{Boundary conditions:}}\label{sub_sec:BC}
Many computational problems in physics involve non-periodic domains, requiring non-periodic boundary conditions. Therefore, specialized boundary schemes are necessary to accurately compute physical quantities near the edges of the domain. Finite-difference methods are typically used on grids where boundaries align with grid points. The solution is specified at the first and last points, and the governing equations are not solved at these boundaries. Since high-order schemes require neighbouring points that are absent near boundaries, reduced-order or biased schemes are applied at boundary-adjacent points, as they lack sufficient neighbours to support the same numerical approach used in the interior of the grid.
\par
The values for $u_j^{n}$, for $j=1, 2, N-1, N$ are obtained using the boundary condition and hence need to be treated as known quantities. The computation of the spatial derivatives \( u_3' \) and \( u_{N-2}' \) is nontrivial because standard centered finite difference formulas cannot be applied directly near the boundaries. In particular, for a sixth-order accurate centered finite difference approximation of the first derivative, one typically requires symmetric stencils that span three grid points on either side of the evaluation point. However, at the grid points \( x_3 \) and \( x_{N-2} \), such symmetric stencils would require access to values outside the computational domain, which are not available.\\
To address this issue, we use one-sided finite difference formulas that are specially constructed to maintain sixth-order accuracy despite the lack of symmetry in the stencil. These formulas are derived by expanding the solution \( u(x) \) in a Taylor series about the point of interest and determining coefficients that yield a truncation error of \( \mathcal{O}(h^6) \).\\
For example, around the point \( x_3 \), we consider Taylor series expansions of nearby grid values:
As an illustration, to derive a high-order one-sided approximation at the point \( x_3 \), we express the nearby function values \( u_j \), for \( j = 1, 2, \ldots, 6 \), using their Taylor series expansions around that point. These expansions can be compactly written in the form:
\[
u_j = u + jh\, u' + \frac{(jh)^2}{2!} u'' + \frac{(jh)^3}{3!} u^{(3)} + \frac{(jh)^4}{4!} u^{(4)} + \frac{(jh)^5}{5!} u^{(5)} + \mathcal{O}(h^6), \quad j = 1,2,\ldots,6.
\]
The first derivative at \( x_3 \) is then given by:
\[
u_3' = u'(x + 3h) = u' + 3h\, u'' + \frac{9h^2}{2} u^{(3)} + \frac{27h^3}{6} u^{(4)} + \frac{81h^4}{24} u^{(5)} + \mathcal{O}(h^5).
\]
To obtain a one-sided finite difference formula of the form
\[
u_3' \approx \frac{1}{h} \left( a_1 u_1 + a_2 u_2 + a_3 u_3 + a_4 u_4 + a_5 u_5 + a_6 u_6 \right),
\]
we substitute the Taylor expansions into the linear combination and match terms with the Taylor expansion of \( u_3' \). This leads to the following system of equations for the coefficients \( a_1, a_2, \ldots, a_6 \):
\[
\begin{aligned}
& a_1 + a_2 + a_3 + a_4 + a_5 + a_6 = 0, \\
& a_1(1) + a_2(2) + a_3(3) + a_4(4) + a_5(5) + a_6(6) = 1, \\
& a_1 \left( \frac{1}{2} \right) + a_2(2) + a_3 \left( \frac{9}{2} \right) + a_4(8) + a_5 \left( \frac{25}{2} \right) + a_6(18) = 3, \\
& a_1 \left( \frac{1}{6} \right) + a_2 \left( \frac{4}{3} \right) + a_3 \left( \frac{9}{2} \right) + a_4 \left( \frac{64}{6} \right) + a_5 \left( \frac{125}{6} \right) + a_6(36) = \frac{9}{2}, \\
& a_1 \left( \frac{1}{24} \right) + a_2 \left( \frac{1}{3} \right) + a_3 \left( \frac{27}{8} \right) + a_4 \left( \frac{256}{24} \right) + a_5 \left( \frac{625}{24} \right) + a_6 \left( \frac{1296}{24} \right) = \frac{9}{2}, \\
& a_1 \left( \frac{1}{120} \right) + a_2 \left( \frac{2}{15} \right) + a_3 \left( \frac{81}{40} \right) + a_4 \left( \frac{1024}{120} \right) + a_5 \left( \frac{3125}{120} \right) + a_6 \left( \frac{7776}{120} \right) = \frac{81}{24}.
\end{aligned}
\]
After solving the resulting system, we obtain:
\[
a_1 = \frac{1}{20}, \quad a_2 = -\frac{1}{2}, \quad a_3 = -\frac{1}{3}, \quad a_4 = 1, \quad a_5 = -\frac{1}{4}, \quad a_6 = \frac{1}{30}.
\]
Thus, the sixth-order one-sided finite difference approximation of the first derivative at \( x_3 \) is:
\[
u_3' = \frac{1}{h} \left( \frac{1}{20}u_1 - \frac{1}{2}u_2 - \frac{1}{3}u_3 + u_4 - \frac{1}{4}u_5 + \frac{1}{30}u_6 \right) + \mathcal{O}(h^6).
\]
Similarly, the computation of \( u_{N-2}' \) near the right boundary is performed using a sixth-order one-sided stencil that involves grid points \( x_{N-5} \) through \( x_N \). By performing a Taylor series expansion about \( x_{N-2} \) and solving the corresponding coefficient system, we obtain the mirror-image formula:
\[
u_{N-2}' = \frac{1}{h} \left( -\frac{1}{20}u_N + \frac{1}{2}u_{N-1} + \frac{1}{3}u_{N-2} - u_{N-3} + \frac{1}{4}u_{N-4} - \frac{1}{30}u_{N-5} \right) + \mathcal{O}(h^6).
\]
For $\mathbf{u}^{n} = (u_{3}^n, u_{4}^n, \cdots, u_{N-3}^n, u_{N-2}^n)^T$, the first derivative in the matrix form is given by,
\begin{equation}
    \mathbf{A}_1 \mathbf{u^{\prime}} = \dfrac{1}{h} (\mathbf{B}_1 \mathbf{u}^n + \mathbf{C}_1^n),   
\end{equation} 
where, the matrices $\mathbf{A}_1, \mathbf{B}_1 \in \mathbb{R}^{(N-4) \times (N-4)}$ and column vector $\mathbf{C}_1^n \in \mathbb{R}^{(N-4) \times 1}$, are given by 
\begin{equation} 
\mathbf{A}_1 = 
\begin{pmatrix}
1 & 0 & 0 & \cdots & 0 & 0 & 0 & 0 \\[0.3cm]
\dfrac{1}{3} & 1 & \dfrac{1}{3} & \ddots & & & & 0 \\[0.3cm]
0 & \dfrac{1}{3} & 1 & \dfrac{1}{3} & \ddots & & & 0 \\[0.3cm]
\vdots & \ddots & \ddots & \ddots & \ddots & \ddots & & \vdots \\[0.3cm]
0 & & \ddots & \dfrac{1}{3} & 1 & \dfrac{1}{3} & 0 & 0 \\[0.3cm]
0 & & & \ddots & \dfrac{1}{3} & 1 & \dfrac{1}{3} & 0 \\[0.3cm]
0 & & & & \ddots & \dfrac{1}{3} & 1 & \dfrac{1}{3} \\[0.3cm]
0 & 0 & 0 & \cdots & 0 & 0 & 0 & 1 \\
\end{pmatrix}
, \,
\mathbf{B}_1 = 
\begin{pmatrix}
-\dfrac{1}{3} & 1 & -\dfrac{1}{4} & \dfrac{1}{30} & \cdots & 0 & 0 & 0 \\[0.3cm]
-\dfrac{7}{9} & 0 & \dfrac{7}{9} & \dfrac{1}{36} & \ddots & & & 0 \\[0.3cm]
-\dfrac{1}{36} & -\dfrac{7}{9} & 0 & \dfrac{7}{9} & \dfrac{1}{36} & \ddots & & 0 \\[0.3cm]
0 & -\dfrac{1}{36} & -\dfrac{7}{9} & 0 & \dfrac{7}{9} & \dfrac{1}{36} & \ddots & 0 \\[0.3cm]
\vdots & \ddots & \ddots & \ddots & \ddots & \ddots & \ddots & \vdots \\[0.3cm]
0 & & \ddots & -1 & -\dfrac{7}{9} & 0 & \dfrac{7}{9} & \dfrac{1}{36} \\[0.3cm]
0 & & & 0 & -\dfrac{1}{36} & -\dfrac{7}{9} & 0 & \dfrac{7}{9} \\[0.3cm]
0 & 0 & 0 & \cdots & -\dfrac{1}{30} & \dfrac{1}{4} & -\dfrac{1}{36} & \dfrac{1}{3} \\
\end{pmatrix},
\end{equation}
\begin{equation} 
\mathbf{C}_1^n = \Biggl( \dfrac{1}{20}u_1^n-\dfrac{1}{2}u_2^n, -\dfrac{1}{36}u_2^n,\, 0, \cdots, 0,\, \dfrac{1}{36}u_{N-1}^n, -\dfrac{1}{20}u_{N}^n +\dfrac{1}{2}u_{N-1}^n \Biggr)^T.
\end{equation}
Similarly, the one-sided finite difference approximation of sixth-order for the second derivatives at the grid points $x_3$, and $x_{N-2}$ are given by
\begin{equation}
\begin{split} 
u^{\prime\prime}_{3} &= \dfrac{1}{h^2} \biggl[ -\dfrac{1}{12}u_{1} +\dfrac{4}{3}u_{2}-\dfrac{5}{2}u_{3}+\dfrac{4}{3}u_{4}-\dfrac{1}{12}u_{5} \biggr],\\
u^{\prime\prime}_{N-2} &= \dfrac{1}{h^2} \biggl[ -\dfrac{1}{12}u_{N} +\dfrac{4}{3}u_{N-1}-\dfrac{5}{2}u_{N-2} +\dfrac{4}{3}u_{N-3} -\dfrac{1}{12}u_{N-4} \biggr].
\end{split}
\end{equation}
The second derivative in the matrix form is given by,
\begin{equation}
    \mathbf{A}_2 \mathbf{u^{\prime\prime}} = \dfrac{1}{h} (\mathbf{B}_2 \mathbf{u}^n +\mathbf{C}_2^{n}),   
\end{equation}
where, the matrix $\mathbf{A}_2$, $\mathbf{B}_2$ and column vector $\mathbf{C}_2^{n}$, are given by
\begin{equation} 
\mathbf{A}_2 = 
\begin{pmatrix}
1 & 0 & 0 & \cdots & 0 & 0 & 0 & 0 \\[0.3cm]
\dfrac{2}{11} & 1 & \dfrac{2}{11} & \ddots & & & & 0 \\[0.3cm]
0 & \dfrac{2}{11} & 1 & \dfrac{2}{11} & \ddots & & & 0 \\[0.3cm]
\vdots & \ddots & \ddots & \ddots & \ddots & \ddots & & \vdots \\[0.3cm]
0 & & \ddots & \dfrac{2}{11} & 1 & \dfrac{2}{11} & 0 & 0 \\[0.3cm]
0 & & & \ddots & \dfrac{2}{11} & 1 & \dfrac{2}{11} & 0 \\[0.3cm]
0 & & & & \ddots & \dfrac{2}{11} & 1 & \dfrac{2}{11} \\[0.3cm]
0 & 0 & 0 & \cdots & 0 & 0 & 0 & 1 \\
\end{pmatrix}
,\,
\mathbf{B}_2 = 
\begin{pmatrix}
-\dfrac{5}{2} & \dfrac{4}{3} & -\dfrac{1}{12} & 0 & \cdots & 0 & 0 & 0 \\[0.3cm]
\dfrac{12}{11} & -\dfrac{51}{22} & \dfrac{12}{11} & \dfrac{3}{44} & \ddots & & & 0 \\[0.3cm]
\dfrac{3}{44} & \dfrac{12}{11} & -\dfrac{51}{22} & \dfrac{12}{11} & \dfrac{3}{44} & \ddots & & 0 \\[0.3cm]
0 & \dfrac{3}{44} & \dfrac{12}{11} & -\dfrac{51}{22} & \dfrac{12}{11} & \dfrac{3}{44} & \ddots & 0 \\[0.3cm]
\vdots & \ddots & \ddots & \ddots & \ddots & \ddots & \ddots & \vdots \\[0.3cm]
0 & & \ddots & \dfrac{3}{44} & \dfrac{12}{11} & -\dfrac{51}{22} & \dfrac{12}{11} & \dfrac{3}{44} \\[0.3cm]
0 & & & 0 & \dfrac{3}{44} & \dfrac{12}{11} & -\dfrac{51}{22} & \dfrac{12}{11} \\[0.3cm]
0 & 0 & 0 & \cdots & 0 & -\dfrac{1}{12} & \dfrac{4}{3} & -\dfrac{5}{2} \\
\end{pmatrix},
\end{equation}

\begin{equation} 
\mathbf{C}_2^{n} = \Biggl( -\dfrac{1}{12}u_1^n+\dfrac{4}{3}u_2^n,\, \dfrac{3}{44}u_2^n,\, 0, \cdots, 0,\, \dfrac{3}{44}u_{N-1}^n ,\, -\dfrac{1}{12}u_{N}^n +\dfrac{4}{3}u_{N-1}^n \Biggr)^T.
\end{equation}
Thus, the method (\ref{eqn:Sobolev2}) now takes the form
\begin{equation}
\begin{split}
       \Biggl( I -\dfrac{\delta}{h^2} \mathbf{A}_2^{-1}\mathbf{B}_2 \Biggr) \mathbf{u}_t &=  -\dfrac{1}{h} \text{diag}(f^{\prime}(\mathbf{u}^n))\mathbf{A}_1^{-1}\mathbf{B}_1 \mathbf{u}^n + \dfrac{\gamma}{h^2} \mathbf{A}_2^{-1}\mathbf{B}_2  \mathbf{u}^n + \mathbf{g}^n\\
        & + \dfrac{\delta}{h^2} \mathbf{A}_2^{-1}\mathbf{C}_2^{n+1} -\dfrac{1}{h}\text{diag}(f^{\prime}(\mathbf{u}^n))\mathbf{A}_1^{-1}\mathbf{C}_1^n + \dfrac{\gamma}{h^2} \mathbf{A}_2^{-1}\mathbf{C}_2^{n},\\  
       \Biggl( h^2I -\delta \mathbf{A}_2^{-1}\mathbf{B}_2\Biggr) \mathbf{u}_t &= -h \, \text{diag}(f^{\prime}(\mathbf{u}^n))\mathbf{A}_1^{-1}\mathbf{B}_1 \mathbf{u}^n + \gamma \, \mathbf{A}_2^{-1}\mathbf{B}_2  \mathbf{u}^n + h^2\mathbf{g}^n\\
        & + \delta \, \mathbf{A}_2^{-1}\mathbf{C}_2^{n+1} -h\, \text{diag}(f^{\prime}(\mathbf{u}^n))\mathbf{A}_1^{-1}\mathbf{C}_1^n + \gamma \, \mathbf{A}_2^{-1}\mathbf{C}_2^{n}.\\ 
\end{split}       
\end{equation}
For the time derivative, we apply the forward Euler method, leading to the matrix equation
\begin{equation}
\begin{split}
  \Biggl( I - \dfrac{\delta}{h^2} \mathbf{A}_2^{-1}\mathbf{B}_2\Biggr) \mathbf{U}^{n+1} &= \Biggl(I - \dfrac{\tau}{h} \text{diag}(f^{\prime}(\mathbf{U}^n))\mathbf{A}_1^{-1}\mathbf{B}_1 + \dfrac{1}{h^2}\biggl(\tau \gamma - \delta  \biggr) \mathbf{A}_2^{-1}\mathbf{B}_2 \Biggr) \mathbf{U}^{n} + \tau \mathbf{g}^n  \\
  & + \dfrac{\delta}{h^2} \mathbf{A}_2^{-1}\mathbf{C}_2^{n+1} -\dfrac{\tau}{h}\text{diag}(f^{\prime}(\mathbf{u}^n))\mathbf{A}_1^{-1}\mathbf{C}_1^n + \dfrac{1}{h^2}\biggl(\tau \gamma - \delta  \biggr) \mathbf{A}_2^{-1}\mathbf{C}_2^{n},
  \end{split} 
\end{equation}
where the approximate solution vector takes the form $\mathbf{U}^{n} = (u_{3}^n, u_{4}^n, \cdots, u_{N-3}^n, u_{N-2}^n)^T$. The extension of the present scheme to the two-dimensional case is carried out in a dimension-by-dimension manner for spatial discretization.

\section{Error analysis}\label{sec:3}
In this section, we perform a stability analysis of the Compact6 method, which is detailed out in Section \ref{sec:2}. This is achieved through von-Neumann analysis, focusing specifically on a Sobolev-type equation governed by equation (\ref{eqn:A}), where we assume $f(u)$ is linear and $\mathrm{g}$ is zero. The primary equation for this analysis is:
\begin{equation}\label{eqn:linearSobolev}
    u_t+\alpha u_x = \gamma u_{xx} + \delta u_{xxt}, \quad (x,t) \in (0,b) \times (0, T],
\end{equation}
where $\alpha$ is a given real number. Given the linearity of the problem, it is sufficient to consider a single Fourier mode for simplification:
\begin{equation}\label{eqn:Fourier1}
    u_j(t) = \hat{u}(t) e^{ij\theta}, 
\end{equation}
where $\theta = \omega h$, $\omega$ denotes the wave number and $i = \sqrt{-1} $. By substituting this Fourier mode into the Compact6 method's approximations for the first and second derivatives, we replace $u_x$ and $u_{xx}$ in equation (\ref{eqn:linearSobolev}), yielding an ordinary differential equation:
\begin{equation}\label{eqn:AF}
    \dfrac{d u_j}{dt} = \mathcal{C}(\theta) u_j.
\end{equation}
which is the semi-discrete form of (\ref{eqn:linearSobolev}). Here, $\mathcal{C}(\theta)$ is identified as the amplification factor. 
\par Substituting (\ref{eqn:Fourier1}) into the expression (\ref{eqn:1a}) of the first derivative of the Compact6 method, we get
\begin{equation}\label{eqn:Compact6-1}
     \begin{aligned}
              \mathbf{u}^{\prime}_j &= \dfrac{i}{9h} \biggl[\dfrac{14 \sin(\theta) + \dfrac{1}{2} \sin(2\theta)}{1+\dfrac{2}{3} \cos(\theta)} \biggr] u_j(t),\\
               &= \dfrac{i}{6h} \biggl[\dfrac{28 \sin(\theta) + \sin(2\theta)}{3+2 \cos(\theta)} \biggr] u_j(t), \quad j = 3,4,\cdots, N-2, \\
        \end{aligned}
\end{equation}
similarly, substituting (\ref{eqn:Fourier1}) into the expression (\ref{eqn:2a}) of the second derivative of the Compact6 method, we get
\begin{equation}
     \begin{aligned}
              \mathbf{u}^{\prime\prime}_j &= \dfrac{1}{11h^2} \biggl[\dfrac{24 \cos(\theta) + \dfrac{3}{2} \cos(2\theta) - \dfrac{51}{2}}{1+\dfrac{4}{11} \cos(\theta)} \biggr] u_j(t),\\
               &= \dfrac{1}{2h^2} \biggl[\dfrac{48 \cos(\theta) + 3 \cos(2\theta) - 51}{11+4 \cos(\theta)} \biggr] u_j(t), \quad j = 3,4,\cdots, N-2. \\
        \end{aligned}
\end{equation}
Upon substituting these expressions in (\ref{eqn:linearSobolev}), we get
\begin{equation}
    (u_j)_t-\delta \bigl( \mathbf{u}_j^{\prime\prime}\bigr)_t = -\alpha  \mathbf{u}_j^{\prime} + \gamma \mathbf{u}_j^{\prime\prime}, 
\end{equation}
and we arrive at the semi-discrete form (\ref{eqn:AF}), where the amplification factor takes the form
\begin{equation}
    \mathcal{C}(\theta)=\displaystyle \dfrac{\dfrac{\gamma}{2h^2} \biggl(\dfrac{48 \cos(\theta) + 3 \cos(2\theta) - 51}{11+4 \cos(\theta)} \biggr) - \dfrac{i \alpha}{6h} \biggl(\dfrac{28 \sin(\theta) + \sin(2\theta)}{3+2 \cos(\theta)} \biggr)}{1-\dfrac{\delta}{2h^2} \biggl(\dfrac{48 \cos(\theta) + 3 \cos(2\theta) - 51}{11+4 \cos(\theta)} \biggr)}.
\end{equation}

\begin{theorem}\label{Thm:Decay_estimate}
Suppose the initial data $u_0$ is bounded. Then, for each $j = 1, \dots, N$, the solution $u_j$ of the semi-discrete system (\ref{eqn:AF}) satisfies the following inequality:
\begin{equation}\label{Decay_estimate}
    \Vert u(t) \rVert_{\infty} \leq e^{\mathfrak{Re}(\mathcal{C} (\theta))t} \, \Vert u_0 \rVert_{\infty}, \quad t \in [0, T].
\end{equation}
\end{theorem}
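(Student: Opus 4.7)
The plan is to integrate the semi-discrete equation (\ref{eqn:AF}) directly, exploiting the fact that after the Fourier substitution (\ref{eqn:Fourier1}) the system decouples mode-by-mode and $\mathcal{C}(\theta)$ is independent of $t$. The argument then reduces to elementary facts about the complex exponential, so no serious analytic work is required; the real content of the statement lies in the spectral reduction that has already been performed.

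First, I would observe that equation (\ref{eqn:AF}), namely $du_j/dt = \mathcal{C}(\theta)\, u_j$, is a linear first-order ODE with a constant (complex) coefficient, whose unique solution is
\begin{equation*}
u_j(t) = e^{\mathcal{C}(\theta)\, t}\, u_j(0), \qquad t \in [0,T].
\end{equation*}
Boundedness of the initial data is enough at this stage since the system is finite-dimensional in $j$, so no further regularity is needed.

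Second, taking the modulus and using $|e^{z}| = e^{\mathfrak{Re}(z)}$ for every $z \in \mathbb{C}$, I would deduce
\begin{equation*}
|u_j(t)| = e^{\mathfrak{Re}(\mathcal{C}(\theta))\, t}\, |u_j(0)|, \qquad j = 1,\ldots,N.
\end{equation*}
Because the exponential factor on the right is independent of $j$, taking the maximum over $j$ on both sides yields
\begin{equation*}
\Vert u(t) \rVert_{\infty} = e^{\mathfrak{Re}(\mathcal{C}(\theta))\, t}\, \Vert u_0 \rVert_{\infty},
\end{equation*}
which delivers the stated inequality (as an equality, in fact).

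The main obstacle is conceptual rather than technical: one must be comfortable with the mode-by-mode reduction performed before the theorem, namely that the Compact6 operators act diagonally on the Fourier basis at the interior indices $j = 3,\ldots,N-2$, so that a scalar ODE really governs the evolution of each mode. This has been established in the derivation leading to (\ref{eqn:AF}) and can simply be invoked. A minor point worth flagging is that the bound (\ref{Decay_estimate}) is a genuine decay estimate only when $\mathfrak{Re}(\mathcal{C}(\theta)) \leq 0$; verifying this sign condition uniformly in $\theta \in [-\pi,\pi]$ from the explicit formula for $\mathcal{C}(\theta)$ is a separate analytic task that the present statement does not subsume.
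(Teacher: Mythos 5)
Your proposal is correct and follows essentially the same route as the paper: solve the constant-coefficient scalar ODE (\ref{eqn:AF}) explicitly, take the modulus using $|e^{z}| = e^{\mathfrak{Re}(z)}$, and pass to the supremum over $j$. Your closing remark that the bound is only a genuine decay estimate when $\mathfrak{Re}(\mathcal{C}(\theta)) \leq 0$ is a fair observation but, as you note, lies outside what the theorem claims.
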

\begin{proof}
Integrating the equation (\ref{eqn:AF}) over the time interval $[0, t]$. At $t=0$, let $u_j(0) = u_0(x_j)$, we have
\begin{equation*}
    \int_{u_0(x_j)}^{u_j(t)} \dfrac{d u_j}{u_j} =\int_{0}^{t} \mathcal{C}(\theta) dt.
\end{equation*}
On simplification, we get
\begin{equation*}
    u_j(t) = e^{\mathcal{C}(\theta)t} \, u_0(x_j).
\end{equation*}
By taking the modulus on both sides, we obtain
\begin{equation*}
    |u_j(t)| \leq e^{\mathfrak{Re}(\mathcal{C}(\theta))t} \, \Vert u_0 \rVert_{\infty}.
\end{equation*}
This inequality holds for every $j = 1, \dots, N$, thus establishing the desired result. 
\end{proof}
\par
Upon applying the forward Euler scheme to equation (\ref{eqn:AF}), we derive the expression for the fully discrete scheme given by
\begin{equation}
    u_j^{n+1} = \mathcal{L}(\theta) u_j^n,
\end{equation}
where $\mathcal{L}(\theta)$ is given by
\begin{equation}\label{eqn:K_1}
    \mathcal{L}(\theta) = \dfrac{\biggl(1+\dfrac{\gamma\tau - \delta}{2h^2} \bigl(\dfrac{48 \cos(\theta) + 3 \cos(2\theta) - 51}{11+4 \cos(\theta)} \bigr)\biggr) - \dfrac{i \alpha \tau}{6h} \bigl(\dfrac{28 \sin(\theta) + \sin(2\theta)}{3+2 \cos(\theta)} \bigr)}{1-\dfrac{\delta}{2h^2} \bigl(\dfrac{48 \cos(\theta) + 3 \cos(2\theta) - 51}{11+4 \cos(\theta)} \bigr)}.
\end{equation}
In the context of von Neumann stability analysis, the amplification factor is generally expected to stay within a limit of 1. Nevertheless, this condition can be relaxed to $1 + C \tau$, where $C$ is a positive constant \cite{leveque2002finite, strikwerda2004finite}. The following theorem presents $L_2$-stability estimates for the method proposed, specifically when applied to the linear equation (\ref{eqn:linearSobolev}).
\begin{theorem}\label{Thm:dt_estimate}
The Compact6 method applied to equation (\ref{eqn:linearSobolev}) with the forward Euler method for time discretization remains stable if there exists a constant $C > 0$ such that
\begin{equation}\label{thm:dt1}
    \tau \leq \dfrac{2\biggl( 1 - \dfrac{\delta P}{2 h^2}\biggr)\biggl( C - \dfrac{C\delta P}{2 h^2}- \dfrac{\gamma P}{2 h^2} \biggr)}{\biggl(\dfrac{ \gamma^2 P^2}{4 h^4} + \dfrac{\alpha^2 Q^2}{36 h^2}\biggr)},
\end{equation}
where the constants $P$ and $Q$ are defined by 
 \begin{equation}\label{eqn:AB1}
     P = \biggl(\dfrac{48 \cos(\theta) + 3 \cos(2\theta) - 51}{11+4 \cos(\theta)} \biggr), \quad Q = \biggl(\dfrac{28 \sin(\theta) + \sin(2\theta)}{3+2 \cos(\theta)} \biggr).
 \end{equation}
 \begin{proof}
     The stability of the Compact6 method holds if 
     \begin{equation}
         |\mathcal{L}(\theta)|^2 \leq (1 + C\, \tau)^2 \cong  1 + 2 C\, \tau,
     \end{equation}
where $C = \mathfrak{Re} \dfrac{\biggl(\dfrac{\gamma P}{2h^2} - \dfrac{i\alpha Q}{6h} \biggr)}{\biggl(1 - \dfrac{\delta P}{2 h^2}\biggr)}$. Using equation (\ref{eqn:K_1}) and the notation defined in (\ref{eqn:AB1}), we obtain
     \begin{equation}
         \dfrac{\gamma^2 \tau^2 P^2}{4 h^4} + 2 \biggl(1 - \dfrac{\delta P}{2 h^2}\biggr)\dfrac{\gamma \tau P}{2 h^2} + \dfrac{\alpha^2 \tau^2 Q^2}{36 h^2} \leq 2C\, \tau \biggl(1 - \dfrac{\delta P}{2 h^2}\biggr)^2.
     \end{equation}
     Simplifying the above estimate results in (\ref{thm:dt1}).
 \end{proof}
\end{theorem}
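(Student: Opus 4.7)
The plan is to work directly with the explicit amplification factor in (\ref{eqn:K_1}) and enforce the relaxed von Neumann stability condition $|\mathcal{L}(\theta)|^2 \leq (1+C\tau)^2 \approx 1 + 2C\tau$, where the second relation drops the $C^2\tau^2$ contribution to obtain a cleaner sufficient condition. Since $P$ and $Q$ defined in (\ref{eqn:AB1}) are real, the denominator $D := 1 - \frac{\delta P}{2h^2}$ of $\mathcal{L}(\theta)$ is real, while the numerator is complex with real part $D + \frac{\gamma\tau P}{2h^2}$ and imaginary part $-\frac{\alpha\tau Q}{6h}$. Squaring the modulus therefore yields
\begin{equation*}
|\mathcal{L}(\theta)|^2 \,=\, \frac{\left(D + \frac{\gamma\tau P}{2h^2}\right)^2 + \left(\frac{\alpha\tau Q}{6h}\right)^2}{D^2}.
\end{equation*}

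Next I would multiply the stability inequality by $D^2$, expand the square in the numerator, and cancel the $D^2$ that appears on both sides. This produces the quadratic-in-$\tau$ estimate
\begin{equation*}
2D \cdot \frac{\gamma\tau P}{2h^2} + \tau^2\left(\frac{\gamma^2 P^2}{4h^4} + \frac{\alpha^2 Q^2}{36 h^2}\right) \,\leq\, 2C\tau D^2,
\end{equation*}
which is exactly the intermediate inequality appearing in the proof. Dividing through by $\tau > 0$, moving the $\tau$-independent cross-term to the right-hand side, and factoring $2CD^2 - \frac{\gamma P D}{h^2}$ as $2\bigl(1-\frac{\delta P}{2h^2}\bigr)\bigl(C - \frac{C\delta P}{2h^2} - \frac{\gamma P}{2h^2}\bigr)$ then isolates $\tau$ and produces the bound (\ref{thm:dt1}).

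The step that requires the most care is choosing $C$ so that the resulting restriction is non-trivial, i.e.\ so that the numerator of (\ref{thm:dt1}) is positive. Because $P$ from (\ref{eqn:AB1}) is non-positive on the Fourier circle (it vanishes at $\theta=0$ and attains its minimum near $\theta=\pi$), the factor $1 - \frac{\delta P}{2h^2}$ is uniformly at least $1$ and the quantity $-\frac{\gamma P}{2h^2}$ is non-negative, so any $C > 0$ gives a meaningful bound on $\tau$; the particular $C$ indicated in the proof is precisely $\mathfrak{Re}(\mathcal{C}(\theta))$ from Theorem~\ref{Thm:Decay_estimate}, which is the natural choice since it captures exactly the $\mathcal{O}(\tau)$ contribution that one wishes to absorb into the $1 + 2C\tau$ envelope.
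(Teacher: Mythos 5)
Your proposal is correct and follows essentially the same route as the paper: it enforces $|\mathcal{L}(\theta)|^2 \leq 1 + 2C\tau$, computes the squared modulus of (\ref{eqn:K_1}) using the real quantities $P$ and $Q$, cancels the $D^2$ term to arrive at exactly the paper's intermediate inequality, and then isolates $\tau$ to obtain (\ref{thm:dt1}), with the same choice of $C$. The additional observation that $P \leq 0$ (so the bound is non-trivial for any $C>0$) is a useful sanity check but does not change the argument.
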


\begin{remark}\label{remark:1}
    By setting $\gamma = \delta =0$ in equation (\ref{eqn:K_1}), we derive the expression 
\begin{equation}
    |\mathcal{L}(\theta)|^2 = 1 + \dfrac{\alpha^2 \tau^2}{36 h^2} 
    \biggl(\dfrac{\displaystyle 28 \sin(\theta) + \sin(2\theta)}{\displaystyle 3+2 \cos(\theta)} \biggr)^2 > 1, 
    \quad \theta \neq l \pi, \quad l \in \mathbb{Z}^+.
\end{equation}
   This indicates that the Compact6 method is unconditionally unstable when $C = 0$. 
\end{remark}
\subsection{Nonlinear Stability Analysis}
The von Neumann method is a common tool for analyzing $L^2$-stability, but it is primarily applicable to linear schemes. Its application to nonlinear problems is generally limited. For nonlinear equations, stability can sometimes be analyzed using energy estimates, or in specific scenarios, one may consider $L^\infty$-stability instead. For instance, $L^\infty$-stability has been established for the Burgers–Fisher equation using cubic B-spline quasi-interpolation methods \cite{zhu2010numerical}. However, in the context of Sobolev-type equations, the presence of the dispersive term $u_{xxt}$ makes the stability analysis challenging using these standard approaches \cite{kumar2016b}.\par
An approach to investigate the $L^2$-stability of Sobolev-type equations with $\mathrm{g} = 0$ is through a linearization process \cite{strang1964accurate}. For example, consider the equation:
\begin{equation}
    u_t + f(u)_x - \gamma u_{xx} - \delta u_{xxt} = 0.
\end{equation}
Assume a smooth solution $u$ exists for some small interval $[t, t+\tau]$ within $(0, T]$. To approximate $u(x, t + \tau)$, we expand it using a Taylor series and ignore higher-order terms, yielding:
\begin{equation}\label{eqn:NLS1}
    u(x, t+\tau) \approx u(x, t) + \tau \, v(x, t),
\end{equation}
where $v = u_t$.  Expand $f(u)$ using Taylor series around the known solution $u(x,t)$ as
\begin{equation}
    f(u)\bigg|_{(x, t+\tau)} \approx f(u) + f^\prime(u) \Delta u + \dfrac{1}{2} f^{\prime \prime}(u)(\Delta u)^2 + \cdots \bigg|_{(x, t)} ,
\end{equation}
where $\Delta u \approx \tau \, v(x, t)$. For simplicity, higher-order terms ($(\Delta u)^2, (\Delta u)^3, \cdots $) are neglected, as they are small. Thus, 
\begin{equation}\label{eqn:NLS2}
    f(u)\bigg|_{(x, t+\tau)} \approx f(u) + f^\prime(u) (\tau\, v(x, t)).
\end{equation}
Differentiate $f(u)$ with respect to $x$ and using the Taylor expansion we get
\begin{equation}\label{eqn:NLS3}
\begin{split}
    f(u)_x\bigg|_{(x, t+\tau)} & \approx \dfrac{\partial}{\partial x} \biggl(f(u) + f^\prime(u) (\tau\, v(x, t))\biggr)\\    
    &= f^\prime(u)\, u_x + \tau \biggl(f^\prime(u) v_x + f^{\prime \prime}(u)\, u_x\, v(x,t)\biggr).
\end{split}
\end{equation}
Substituting Eqn. (\ref{eqn:NLS1}) and (\ref{eqn:NLS3}) into the governing equation  and neglecting higher-order terms in $\tau$, we get
\begin{equation}
v_t + f^{\prime}(u) v_x - \gamma v_{xx} - \delta v_{xxt} + f^{\prime \prime}\,(u)\,u_x\,v(x,t) = 0.
\end{equation}
By assuming that the solution $u$ at time level $t$ is known, the above equation can be analyzed for $L^2$-stability using the von Neumann approach, as described in Section \ref{sec:3}.

\section{Numerical results}\label{sec:4}
In this section, the performance of the sixth-order compact scheme is evaluated using several 1D and 2D numerical examples. Forward Euler is employed for time integration, with the time step chosen as $\tau = h^{6}$. Additionally, we compare the performance of our scheme with the Cubic B-spline quasi-interpolation (CBSQI) and the Improved Cubic B-spline quasi-interpolation (ICBSQI) methods proposed in \cite{kumar2016b}, where a fixed time step of $\tau = 10^{-4}$ is used.
\begin{example}
\normalfont
(Advection Free Flow) \\
Consider the 1D linear Sobolev-type equation
\begin{equation}\label{example:1}
    u_t = u_{xx} + u_{xxt}, \quad (x,t) \in [0,30] \times [0, T],
\end{equation}
with the initial condition $u(x,0) = \sin(x)$, for $x \in [0, 30]$. The exact solution for this equation is given by $u(x,t) = e^{-t/2}\sin(x)$ over the domain $(x,t) \in [0,30] \times [0, T]$. For this initial value problem, we apply the Compact6 method, implementing boundary corrections as described in Section \ref{sub_sec:BC} using the exact solution for boundary terms. The numerical solution is computed at $T = 1$ over the spatial domain $[0, 30]$. In Table~\ref{Table:E1a}, the $L^{\infty}$, $L^{1}$, and $L^{2}$ errors and the convergence rates of the method are presented.  As shown, the Compact6 method demonstrates numerical convergence to the exact solution at a rate of six, aligning well with theoretical results from Theorem \ref{eqn:Trun_Err}. The numerical solutions and absolute error for $N = 150$ at time $T = 1$ is depicted in Fig.~\ref{Figure:E1a}. Table~\ref{Table:E1b} presents the error analysis and order of convergence at the final time \( T = 1 \), using a fixed spatial resolution of \( N = 300 \). The table reports numerical errors in the \( L^{\infty} \), \( L^1 \), and \( L^2 \) norms for decreasing time step sizes \( \tau = 10^{-1}, 10^{-2}, 10^{-3}, 10^{-4}, 10^{-5} \). As shown in the table, the numerical scheme exhibits approximately first-order convergence in time across all norms, confirming the expected temporal accuracy of the method. Furthermore, Fig.~\ref{Figure:E1d} compares the $L^{\infty}$ errors of the CBSQI, ICBSQI, and Compact6 schemes in $T = 1$ with $\tau = 0.0001$, indicating that the Compact6 scheme yields significantly smaller errors than the other two schemes.
\par

A numerical verification of the exponential decay in the solution, as per Theorem \ref{Thm:Decay_estimate}, is conducted over the domain \( [0, 2\pi] \) divided into 50 grid points. The graph in Fig.~\ref{Figure:E1b}(i) illustrates the decay of the solution over time, and Fig.~\ref{Figure:E1b}(ii) compares the left and right sides of estimate (\ref{Decay_estimate}). This comparison shows that the numerical solution’s magnitude remains within the bounds given by the right-hand side of the estimate, confirming exponential decay with respect to time. 
\par
Next, we examine the time-step constraints for stability in the Compact6 method. The stability limit based on (\ref{thm:dt1}) becomes $\tau \leq 2 \biggl(1 + \dfrac{7 h^2}{48} \biggr)$. To validate this numerically, we run an experiment over the domain $[0, \pi]$ discretized into 100 intervals. In this setup, the stability bound allows $ \tau = 2.00115 $. The solutions obtained using the proposed method at $ T = 1000 $ are illustrated in Fig.~\ref{Figure:E1c}. Due to the exponential time decay of the solution, the exact solution approaches zero at this large time. Fig.~\ref{Figure:E1c}(a) shows the solutions with $\tau = 2$, where stability, as predicted by the stability estimates, is confirmed. Although the numerical solution deviates from the exact solution, it is bounded within $10^{-4}$. By extending the time interval up to $T = 10,000$, Fig.~\ref{Figure:E1c}(c) shows that the solutions remain bounded within $10^{-4}$. In contrast, taking $ \tau = 2.1 $, just slightly beyond the stability bound, leads to solution blow-up, as seen in Figs.~\ref{Figure:E1c}(b) and \ref{Figure:E1c}(d), where the maximum solution value increases to $10^{16}$ and reaches $10^{193}$ by $T = 10,000$.
\begin{table}[htbp!]
\centering
\captionof{table}{Errors and order of convergence for Example \ref{example:1}.}
\setlength{\tabcolsep}{0pt}
\begin{tabular*}{\textwidth}{@{\extracolsep{\fill}} l *{8}{c} }
\toprule
 \textbf{N} & $\boldsymbol{L^{\infty}}$\textbf{-error} & \textbf{Rate} & $\boldsymbol{L^{1}}$\textbf{-error} & \textbf{Rate} & $\boldsymbol{L^{2}}$\textbf{-error} & \textbf{Rate}\\
\midrule
 40& 1.9599e-02 & - & 1.0490e-02 & - & 1.0883e-02 & - \\
80& 2.7099e-04 & 6.1764 & 1.4551e-04 & 6.1716& 1.5698e-04 & 6.1154 \\
160& 3.9937e-06 & 6.0844 & 2.1853e-06 & 6.0572& 2.3905e-06 & 6.0371 \\
320& 6.0537e-08 & 6.0438 & 3.3416e-08 & 6.0312& 3.6895e-08 & 6.0177 \\
\bottomrule
\end{tabular*}\label{Table:E1a}
\end{table}
\begin{table}[htbp!]
\centering
\captionof{table}{Errors and order of convergence for Example \ref{example:1} at $T=1$ and $N=300$.}
\setlength{\tabcolsep}{0pt}
\begin{tabular*}{\textwidth}{@{\extracolsep{\fill}} l *{8}{c} }
\toprule
 \textbf{$\boldsymbol{\tau}$} & $\boldsymbol{L^{\infty}}$\textbf{-error} & \textbf{Rate} & $\boldsymbol{L^{1}}$\textbf{-error} & \textbf{Rate} & $\boldsymbol{L^{2}}$\textbf{-error} & \textbf{Rate}\\
\midrule
 1e-01& 8.9386e-03 & - & 4.9551e-03 & - & 5.4720e-03 & - \\
1e-02& 8.7621e-04 & 1.0087 & 4.8405e-04 & 1.0102& 5.3438e-04 & 1.0103 \\
1e-03& 8.7450e-05 & 1.0009 & 4.8294e-05 & 1.0010& 5.3314e-05 & 1.0010 \\
1e-04& 8.7434e-06 & 1.0001 & 4.8283e-06 & 1.0001& 5.3302e-06 & 1.0001 \\
1e-05& 8.7443e-07 & 1.0000 & 4.8285e-07 & 1.0000& 5.3302e-07 & 1.0000 \\
\bottomrule
\end{tabular*}
\label{Table:E1b}
\end{table}
\begin{figure}[htbp!]
    \centering
    \begin{minipage}[b]{0.45\linewidth}
      \includegraphics[width=\linewidth]{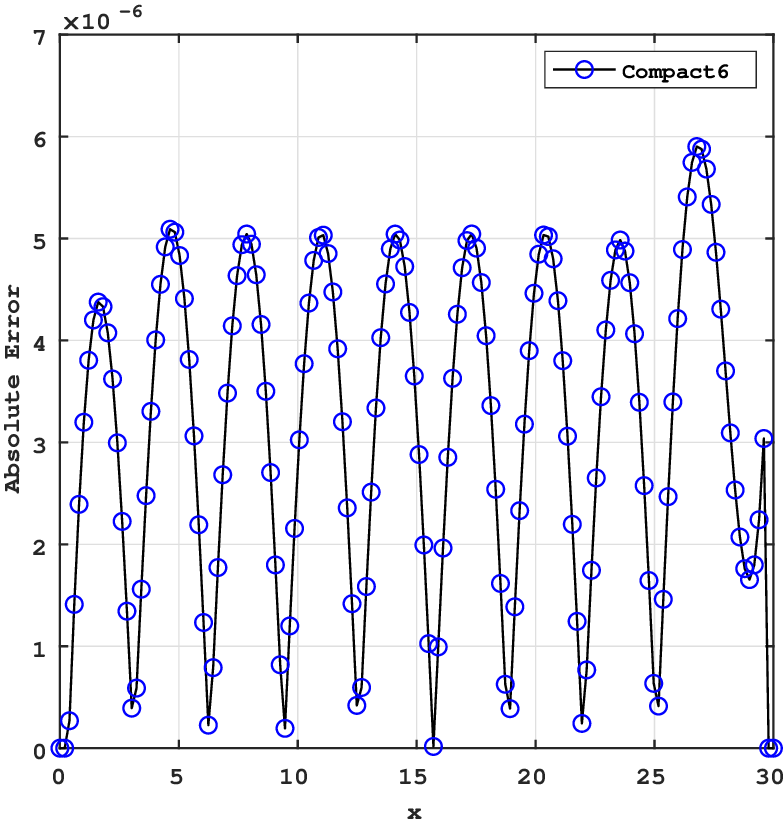}
      \subcaption*{(i) Absolute Error Distribution}
    \end{minipage}\hfill
    \begin{minipage}[b]{0.45\linewidth}
      \includegraphics[width=\linewidth]{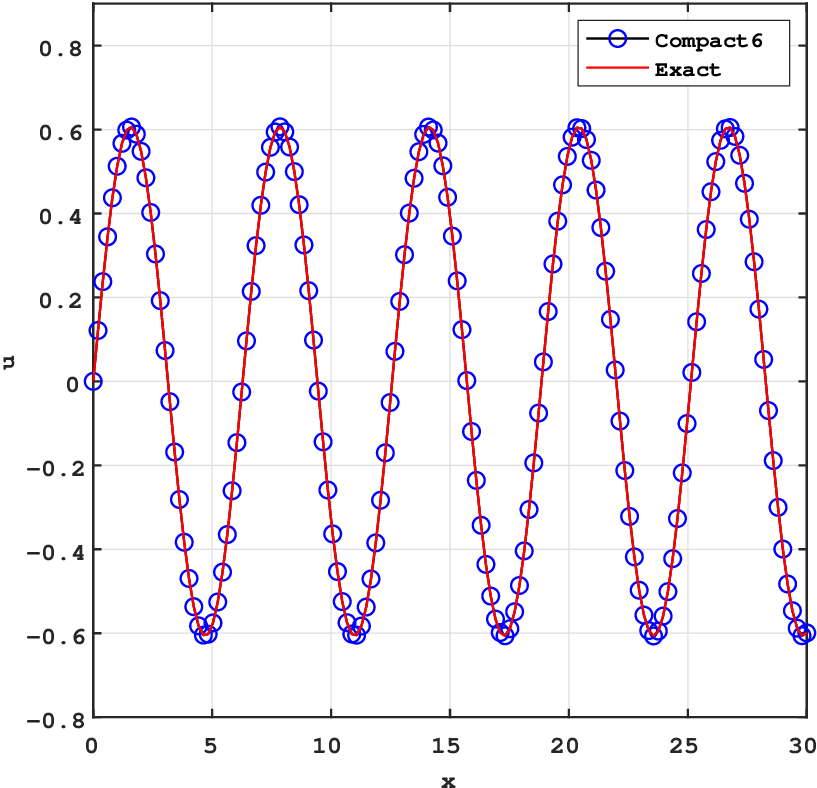}
      \subcaption*{(ii) Numerical vs. Exact Solution}
    \end{minipage}
    \caption{(i) Absolute error distribution for the Compact6 method in Example \ref{example:1} at $T = 1$. (ii) Comparison of the numerical solution from the Compact6 method (o symbols) with the exact solution (solid line) at $T = 1$.}
    \label{Figure:E1a}
\end{figure}  
\begin{figure}[htbp!]  
    \centering
    \begin{minipage}[b]{0.45\linewidth}
      \includegraphics[width=\linewidth]{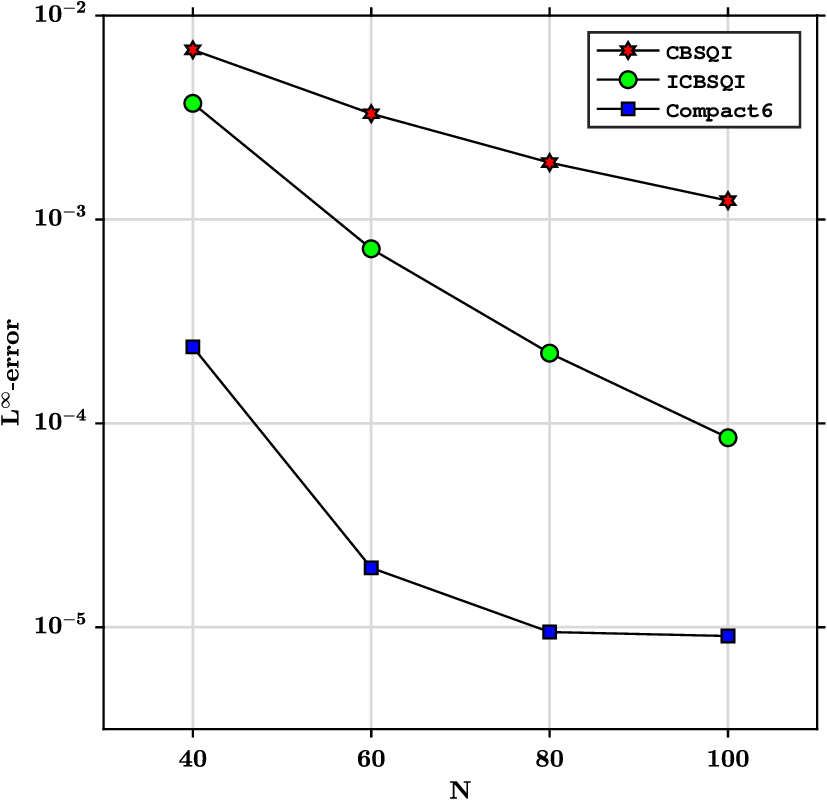}
    \end{minipage}\hfill
    \caption{ Comparison of CBSQI, ICBSQI and Compact6 schemes in terms of $L^{\infty}$ errors (in $log10$ scale) for Example~\ref{example:1} at $T=1$ and $\tau =0.0001$.}
    \label{Figure:E1d} 
\end{figure}
\begin{figure}[htbp!]  
    \centering
    \begin{minipage}[b]{0.45\linewidth}
      \includegraphics[width=\linewidth]{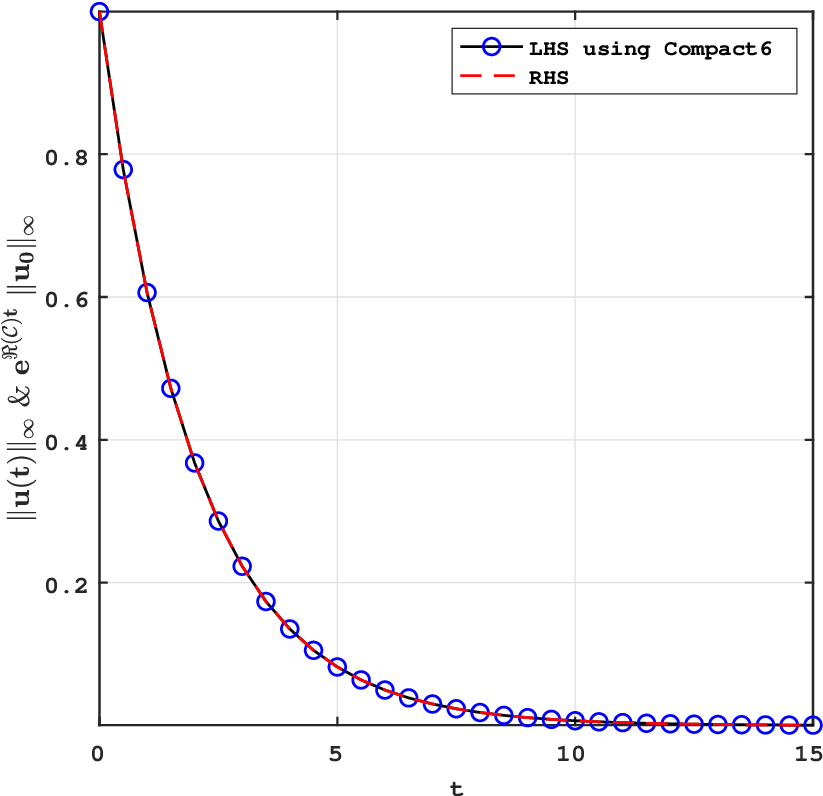}
      \subcaption*{(i) Exponential Decay Validation}
    \end{minipage}\hfill
    \begin{minipage}[b]{0.45\linewidth}
      \includegraphics[width=\linewidth]{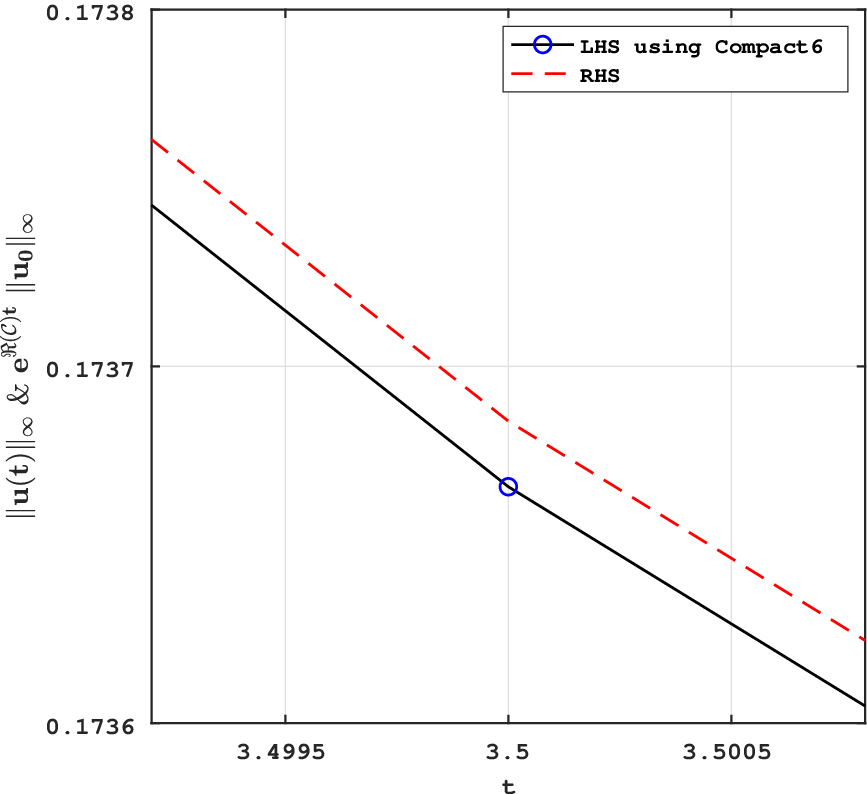}
      \subcaption*{(ii) Decay Estimate Close-Up}
    \end{minipage}
    \caption{(i) Numerical validation of the exponential decay from Theorem \ref{Thm:Decay_estimate} using the Compact6 method. (ii) Zoom around $t=3.5$ showing the decay estimate (\ref{Decay_estimate}). -o- line: left-hand side of (\ref{Decay_estimate}), dashed line: right-hand side.}
    \label{Figure:E1b}
\end{figure}  
\begin{figure}[htbp!]  
    \centering
    \begin{minipage}[b]{0.45\linewidth}
      \includegraphics[width=\linewidth]{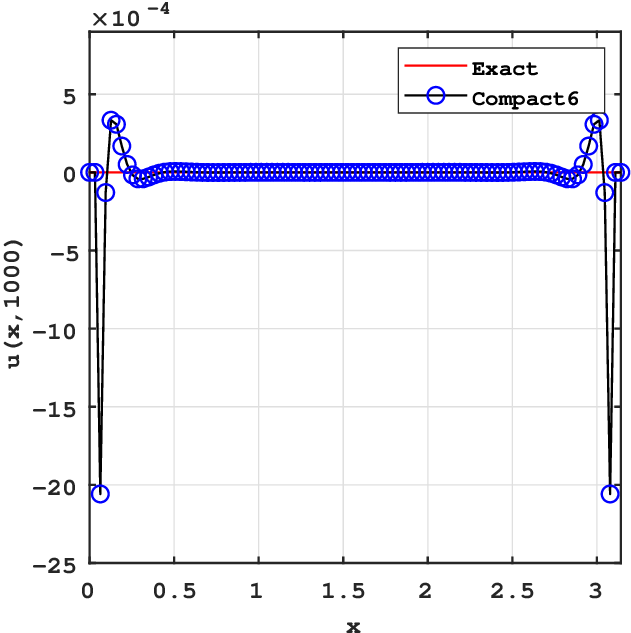}
      \subcaption{Stable Solution, $\tau = 2$, $T = 1000$}
    \end{minipage}\hfill
    \begin{minipage}[b]{0.45\linewidth}
      \includegraphics[width=\linewidth]{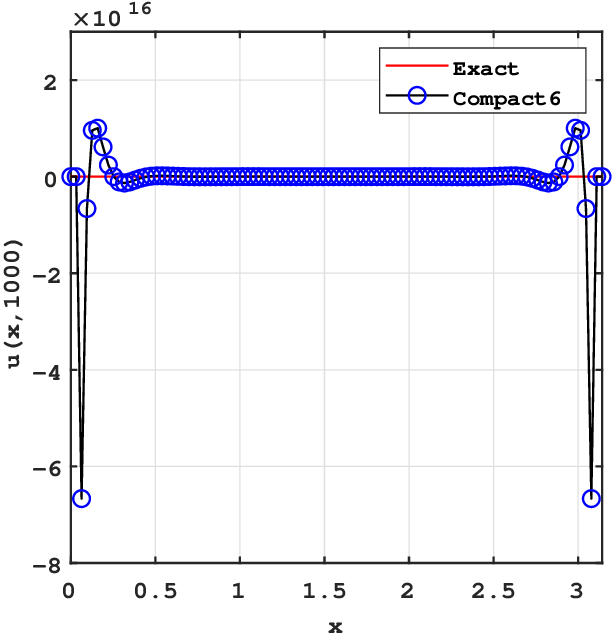}
      \subcaption{Unstable Solution, $\tau = 2.1$, $T = 1000$}
    \end{minipage}
        \begin{minipage}[b]{0.45\linewidth}
      \includegraphics[width=\linewidth]{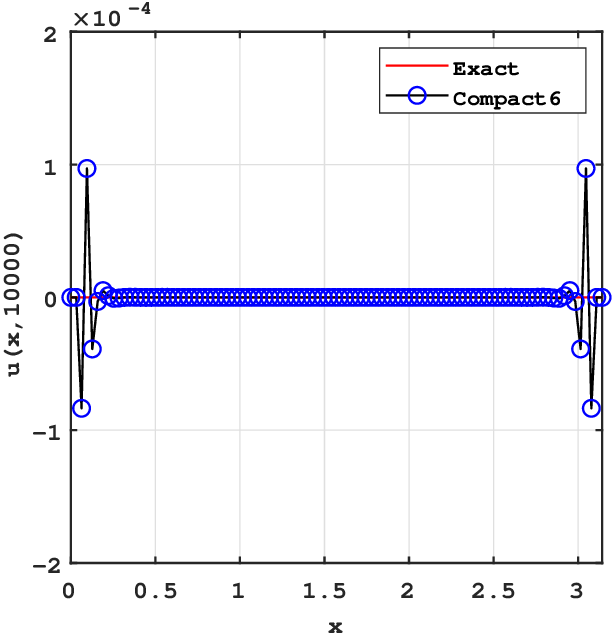}
      \subcaption{Stable Solution, $\tau = 2$, $T = 10,000$}
    \end{minipage}\hfill
    \begin{minipage}[b]{0.45\linewidth}
      \includegraphics[width=\linewidth]{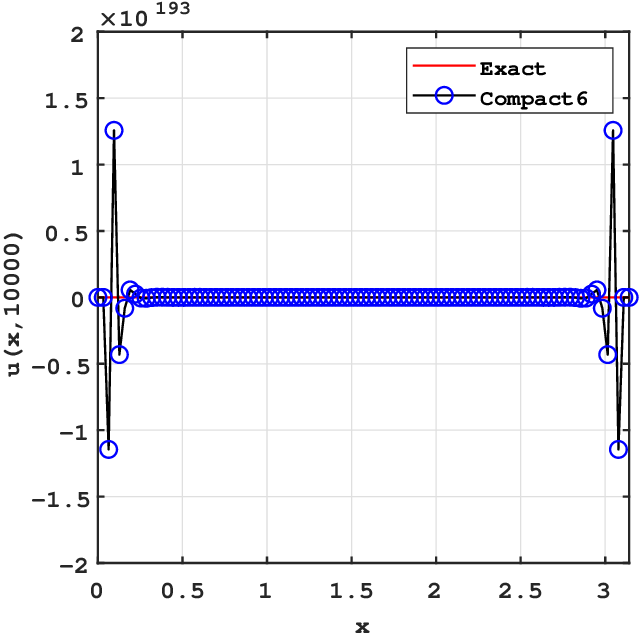}
      \subcaption{Unstable Solution, $\tau = 2.1$, $T = 10,000$}
    \end{minipage}
    \caption{Numerical solutions obtained using the Compact6 method for Example~\ref{example:1} with $N = 100$: (a) $\tau = 2$ and (b) $\tau = 2.1$ at $T = 1000$, and (c) $\tau = 2$ and (d) $\tau = 2.1$ at $T = 10,000$.}
    \label{Figure:E1c}
\end{figure}

\end{example}
\begin{example}
\normalfont
(Advection Free Flow) \\
We now consider the two-dimensional linear Sobolev-type equation
\begin{equation}\label{example:2D_1}
    u_t = u_{xx} + u_{yy} + u_{xxt} + u_{yyt}, \quad (x,y,t) \in [0,30] \times [0,30] \times [0, T],
\end{equation}
with initial condition $u(x,y,0) = \sin(x)\sin(y)$. The exact solution is given by $u(x,y,t) = e^{-2t/3}\sin(x)\sin(y)$ in the same domain. The numerical results at $T = 1$ over the spatial region $[0, 30] \times [0, 30]$ are summarized in Table~\ref{Table:2D_E1a}, showing $L^{\infty}$, $L^{1}$, and $L^{2}$ errors along with the observed convergence rates. The results confirm that the Compact6 method attains sixth-order accuracy. Figure~\ref{Figure:2D_E1a} shows the numerical solution at $T = 1$. Subfigure~(i) presents the surface plot of the solution, while subfigure~(ii) displays the corresponding contour plot. Both plots highlight the accuracy and resolution of the computed solution on a $320 \times 320$ grid.

\begin{table}[htbp!]
\centering
\captionof{table}{Errors and order of convergence for Example \ref{example:2D_1} (Here $\tau = h^6$).}
\setlength{\tabcolsep}{0pt}
\begin{tabular*}{\textwidth}{@{\extracolsep{\fill}} l *{8}{c} }
\toprule
 $\boldsymbol{\textbf{N}_x \times \textbf{N}_y}$ & $\boldsymbol{L^{\infty}}$\textbf{-error} & \textbf{Rate} & $\boldsymbol{L^{1}}$\textbf{-error} & \textbf{Rate} & $\boldsymbol{L^{2}}$\textbf{-error} & \textbf{Rate}\\
\midrule
 40 $\times$ 40 & 9.0298e-02 & - & 3.2364e-02 & -& 3.9275e-02 & - \\
80 $\times$ 80& 1.2381e-03 & 6.1885 & 4.2315e-04 & 6.2571& 5.2048e-04 & 6.2376 \\
160 $\times$ 160& 2.6617e-05 & 5.5396 & 6.3855e-06 & 6.0502& 7.8688e-06 & 6.0476 \\
320 $\times$ 320& 4.4120e-07 & 5.9148 & 9.7715e-08 & 6.0301& 1.2091e-07 & 6.0241 \\
640 $\times$ 640& 6.9475e-09 & 5.9888 & 1.5097e-09 & 6.0163& 1.8731e-09 & 6.0124 \\
1280 $\times$ 1280& 1.0840e-10 & 6.0020 & 2.3448e-11 & 6.0086& 2.9140e-11 & 6.0063 \\
\bottomrule
\end{tabular*}
\label{Table:2D_E1a}
\end{table}

\begin{figure}[htbp!]
    \centering
    \begin{minipage}[b]{0.45\linewidth}
      \includegraphics[width=\linewidth]{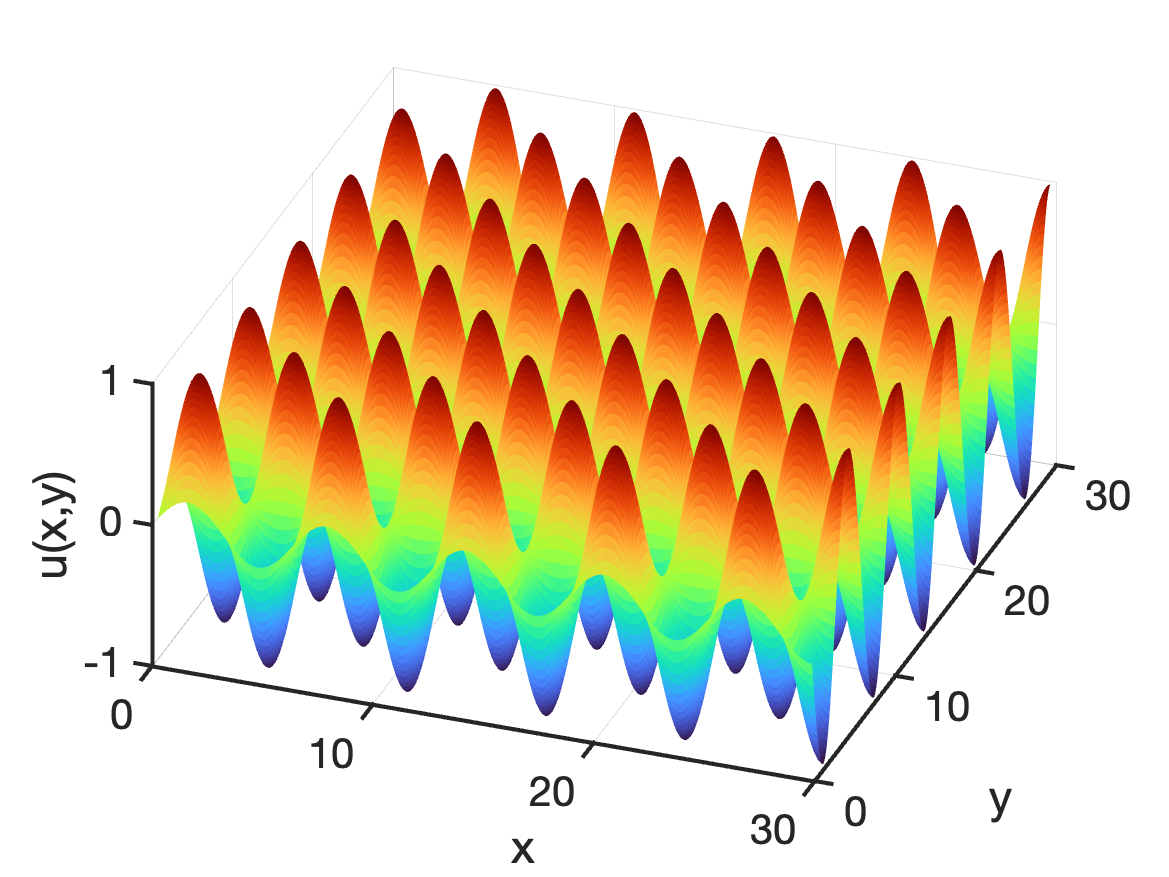}
      \subcaption*{(i) Numerical Solution}
    \end{minipage}\hfill
    \begin{minipage}[b]{0.45\linewidth}
      \includegraphics[width=\linewidth]{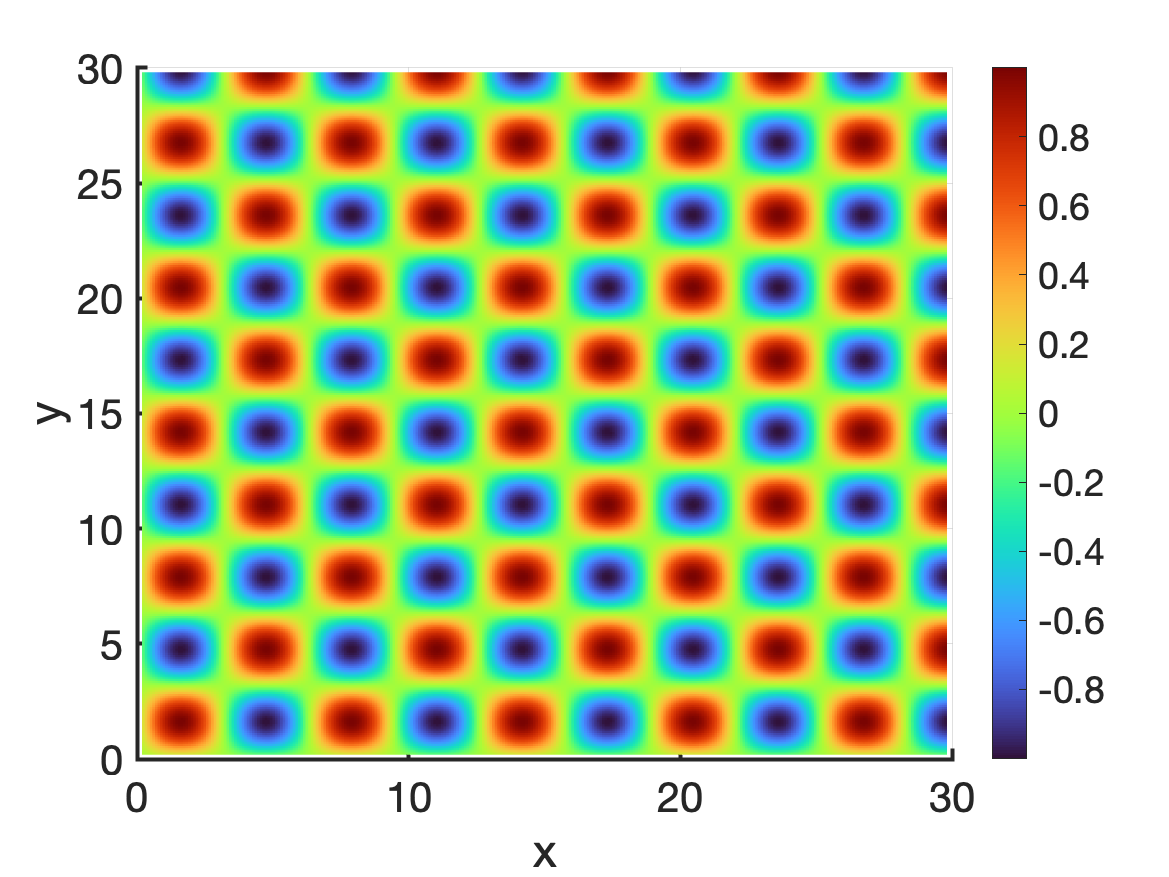}
      \subcaption*{(ii) Contour plot}
    \end{minipage}
    \caption{ Numerical solution and corresponding contour plot obtained using the Compact6 scheme Example~\ref{example:2D_1} at final time $T = 1$, with a grid resolution of ${N}_x \times {N}_y = 320 \times 320$.}
    \label{Figure:2D_E1a}
\end{figure} 
\end{example}
\begin{example}
\normalfont
(Advection-Diffusion Flow) \\
Consider the linear Sobolev-type equation
\begin{equation}\label{example:2}
    u_t + u_x = u_{xx} + u_{xxt}, \quad (x,t) \in [0,30] \times [0, 1],
\end{equation}
with the initial condition $u(x,0) = \sin(x)$. The exact solution is given by $u(x,t) = e^{-t/2}\sin(x-\dfrac{t}{2})$. Implementing the Compact6 method with boundary correction, we use the exact solution to the boundary terms, following the approach outlined in the previous example. In Table~\ref{Table:E2a}, we report the $L^{\infty}$, $L^{1}$, and $L^{2}$ errors, which indicate that the method achieves nearly sixth-order convergence. 
Table~\ref{Table:E2b} displays the temporal error and convergence results at the final time \( T = 1 \), using a fixed spatial discretization of \( N = 300 \). The numerical scheme demonstrates clear first-order accuracy in time across all norms, validating its temporal convergence behavior. Additionally, the comparative analysis in Fig.~\ref{Figure:E2d} highlights the $L^{\infty}$ errors of CBSQI, ICBSQI, and Compact6 schemes for different spatial resolutions with \( \tau = 10^{-4} \). Among these, the Compact6 method consistently outperforms the others, yielding significantly smaller errors and showcasing its superior accuracy and robustness on uniform grids.
\par
We then examine linear stability under the conditions specified by equation \ref{thm:dt1}, where the stability criterion for the Compact6 method is $\tau \leq \dfrac{2 \biggl(1 + \dfrac{7 h^2}{48} \biggr)}{\biggl(1+ \dfrac{507291 h^2}{9000000} \biggr)}$. For $h = 0.0314$ and a computational interval of $[0, \pi]$, this yields an approximate stability threshold of 2. As observed in prior examples, our numerical tests support this threshold as optimal as shown in Fig.~\ref{Figure:E2c}.

\begin{table}[htbp!]
\centering
\captionof{table}{Errors and order of convergence for Example \ref{example:2}.}
\setlength{\tabcolsep}{0pt}
\begin{tabular*}{\textwidth}{@{\extracolsep{\fill}} l *{8}{c} }
\toprule
 \textbf{N} & $\boldsymbol{L^{\infty}}$\textbf{-error} & \textbf{Rate} & $\boldsymbol{L^{1}}$\textbf{-error} & \textbf{Rate} & $\boldsymbol{L^{2}}$\textbf{-error} & \textbf{Rate}\\
\midrule
 40& 4.0010e-02 & - & 2.0771e-02 & - & 2.2165e-02 & - \\
 80& 5.6453e-04 & 6.1472 & 2.8795e-04 & 6.1726& 3.1281e-04 & 6.1468 \\
 160& 8.2482e-06 & 6.0968 & 4.2936e-06 & 6.0675& 4.7370e-06 & 6.0452 \\
 320& 1.2505e-07 & 6.0435 & 6.5461e-08 & 6.0354& 7.2949e-08 & 6.0209 \\
\bottomrule
\end{tabular*}\label{Table:E2a}
\end{table}
\begin{table}[htbp!]
\centering
\captionof{table}{Errors and order of convergence for Example \ref{example:2} at $T=1$ and $N=300$.}
\setlength{\tabcolsep}{0pt}
\begin{tabular*}{\textwidth}{@{\extracolsep{\fill}} l *{8}{c} }
\toprule
 \textbf{$\boldsymbol{\tau}$} & $\boldsymbol{L^{\infty}}$\textbf{-error} & \textbf{Rate} & $\boldsymbol{L^{1}}$\textbf{-error} & \textbf{Rate} & $\boldsymbol{L^{2}}$\textbf{-error} & \textbf{Rate}\\
\midrule
 1e-01& 1.8553e-02 & - & 9.7956e-03 & -& 1.0902e-02 & - \\
1e-02& 1.8173e-03 & 1.0090 & 9.5157e-04 & 1.0126& 1.0590e-03 & 1.0126 \\
1e-03& 1.8135e-04 & 1.0009 & 9.4883e-05 & 1.0013& 1.0560e-04 & 1.0012 \\
1e-04& 1.8131e-05 & 1.0001 & 9.4855e-06 & 1.0001& 1.0556e-05 & 1.0001 \\
1e-05& 1.8128e-06 & 1.0001 & 9.4836e-07 & 1.0001& 1.0555e-06 & 1.0001 \\
\bottomrule
\end{tabular*}
\label{Table:E2b}
\end{table}
 \begin{figure}[htbp!]  
    \centering
    \begin{minipage}[b]{0.45\linewidth}
      \includegraphics[width=\linewidth]{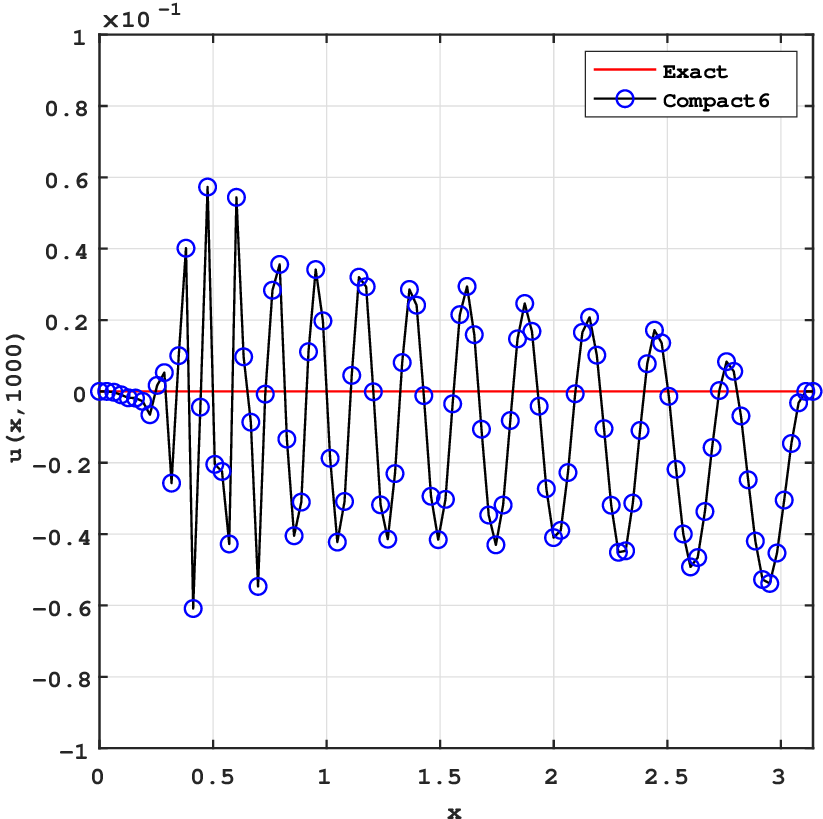}
      \subcaption*{Stable Solution, $\tau = 2$}
    \end{minipage}\hfill
    \begin{minipage}[b]{0.45\linewidth}
      \includegraphics[width=\linewidth]{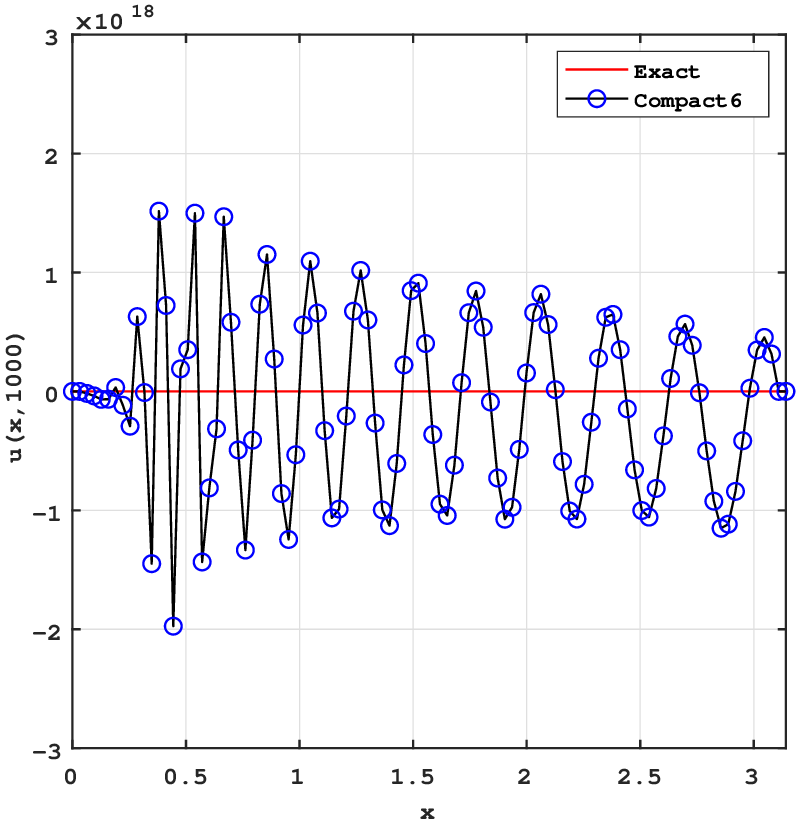}
      \subcaption*{Unstable Solution, $\tau = 2.1$}
    \end{minipage}
    \caption{Numerical solutions using the Compact6 method for Example in \ref{example:2} at $T = 1000$ with $N = 100$: (i) $\tau = 2$, (ii) $\tau = 2.1$.}
    \label{Figure:E2c}
\end{figure}

\begin{figure}[htbp!]  
    \centering
    \begin{minipage}[b]{0.45\linewidth}
      \includegraphics[width=\linewidth]{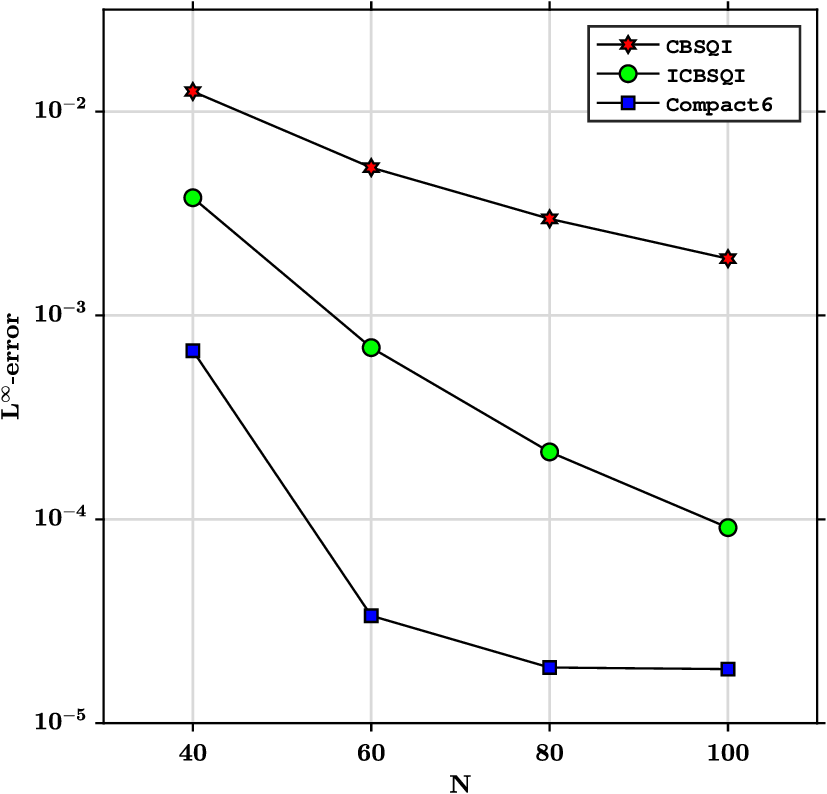}
    \end{minipage}\hfill
    \caption{ Comparison of CBSQI, ICBSQI and Compact6 schemes in terms of $L^{\infty}$ errors (in $log10$ scale) for Example~\ref{example:2} at $T=1$ and $\tau =0.0001$.}
    \label{Figure:E2d} 
\end{figure}

\end{example}
\begin{example}
\normalfont
(Advection-Diffusion Flow) \\
We extend the study to two dimensions by considering the 2D linear Sobolev-type equation
\begin{equation}\label{example:2D_2}
    u_t + u_x + u_y = u_{xx} + u_{yy} + u_{xxt} + u_{yyt}, \quad (x,y,t) \in [0,30] \times [0,30] \times [0, 1],
\end{equation}
subject to the initial condition $u(x,y,0) = \sin(x)\sin(y)$. The exact solution is given by
\[
u(x,y,t) = e^{-2t/3}\sin\left(x - \dfrac{t}{3}\right)\sin\left(y - \dfrac{t}{3}\right).
\]
The Compact6 method is employed to compute the solution, with boundary corrections extended from the 1D formulation. Table~\ref{Table:2D_E2a} presents the $L^{\infty}$, $L^{1}$, and $L^{2}$ error norms, demonstrating that the scheme maintains nearly sixth-order accuracy in two dimensions as well.
\begin{table}[htbp!]
\centering
\captionof{table}{Errors and order of convergence for Example \ref{example:2D_2} (Here $\tau = h^6$).}
\setlength{\tabcolsep}{0pt}
\begin{tabular*}{\textwidth}{@{\extracolsep{\fill}} l *{8}{c} }
\toprule
 $\boldsymbol{\textbf{N}_x \times \textbf{N}_y}$ & $\boldsymbol{L^{\infty}}$\textbf{-error} & \textbf{Rate} & $\boldsymbol{L^{1}}$\textbf{-error} & \textbf{Rate} & $\boldsymbol{L^{2}}$\textbf{-error} & \textbf{Rate}\\
\midrule
 40 $\times$ 40& 9.4893e-02 & - & 3.9219e-02 & - & 4.7519e-02 & - \\
80 $\times$ 80& 2.3417e-03 & 5.3407 & 5.3359e-04 & 6.1997& 6.4883e-04 & 6.1945 \\
160 $\times$ 160& 3.5648e-05 & 6.0376 & 8.1059e-06 & 6.0406& 9.9268e-06 & 6.0304 \\
320 $\times$ 320& 5.3646e-07 & 6.0542 & 1.2449e-07 & 6.0248& 1.5323e-07 & 6.0176 \\
640 $\times$ 640& 8.1970e-09 & 6.0322 & 1.9265e-09 & 6.0139& 2.3783e-09 & 6.0096 \\
1280 $\times$ 1280& 1.2659e-10 & 6.0168 & 2.9949e-11 & 6.0073& 3.7032e-11 & 6.0050 \\
\bottomrule
\end{tabular*}
\label{Table:2D_E2a}
\end{table}
\end{example}
\par
We proceed by applying the Compact6 method to examine certain nonlinear Sobolev-type equations, specifically focusing on two cases: the EW equation and the BBMB equation. The EW equation is an advection-only model, neglecting any dissipation effects, while the BBMB equation incorporates advection, diffusion, and dispersion processes.
\subsection{Equal width equation}
Similar to the RLW equation, the EW equation is characterized by three conserved quantities, representing mass, momentum, and energy, respectively. Olver \cite{olver1979euler} demonstrated that the EW equation can be reformulated in a conservative form $\eta_t + \chi_x = 0$ in only three distinct, non-trivial ways, where $\eta = \eta(u, u_x)$ and $\chi = \chi(u, u_t, u_{xt})$. These invariants are given by
\begin{equation}\label{Invariants}
    \mathfrak{I}_1 = \int_{-\infty}^{\infty} u \, dx, \quad \mathfrak{I}_2 = \int_{-\infty}^{\infty} \left( u^2 + \delta u_x^2 \right) dx, \quad \mathfrak{I}_3 = \int_{-\infty}^{\infty} u^3 \, dx.
\end{equation}
To approximate these integrals, we utilized the Composite Simpson’s rule. This section investigates four test cases to assess the effectiveness of the proposed numerical method: solitary wave solutions, interactions between two solitary waves, interactions of three solitary waves, and the undular bore phenomenon.
\begin{example}\label{example:3}
\normalfont
(Propagation of a single solitary wave) \\
Consider the equal width equation (\ref{eqn:EW}) with the initial condition
\begin{equation}\label{IC1:3}
    u(x,0) = 3c sech^2(k(x-x_0)), \quad x \in [0,30],
\end{equation}
along with the homogeneous Dirichlet boundary conditions
\begin{equation}
    u(0,t) = 0 \quad \text{and} \quad u(30,t) = 0.
\end{equation}
 The exact solution is given by 
\begin{equation}\label{IC1:3_Exact}
    u(x,t) = 3c sech^2\Bigl(k(x-x_0-ct)\Bigr).
\end{equation}
This solution represents the movement of a solitary wave with an amplitude of $3c$ and a width of $k$, initially centered at $x=x_0$, where $k=\sqrt{\dfrac{1}{4\delta}}$ and $c$ denotes the wave speed. Here, $\delta = 1$, $c=0.03$, $x_0 = 10$. Table~\ref{Table:E3a} provides a summary of the $L^{\infty}$-error, $L^{1}$-error, $L^{2}$-error, and the order of convergence for considered scheme at the final time $T = 200$. Fig.~\ref{Figure:E3a}(a) illustrates the absolute error, while Fig.~\ref{Figure:E3a}(b) compares the numerical solution obtained using the Compact6 method with the exact solution at times $T = 0$ and $T = 200$. 
Fig.~\ref{Figure:E3b} provides a comparative study of the $L^{\infty}$ errors at final time \( T = 200 \), with a fixed time step size \( \tau = 10^{-4} \). The results clearly indicate that the Compact6 scheme achieves significantly lower errors compared to the CBSQI and ICBSQI methods across all tested spatial resolutions. This demonstrates the high accuracy and stability of the Compact6 approach, especially for long-time integration.
\par
For solitary wave solutions (\ref{IC1:3_Exact}), each form of $\chi$ satisfies
\begin{equation}\label{Invariant:condition}
    \chi \to 0 \quad \text{as} \quad x \to \pm \infty,
\end{equation}
implying that the integrals
\begin{equation}
    \mathfrak{I}_j = \int_{-\infty}^{\infty} \eta_j \, dx, \quad j = 1, 2, 3,
\end{equation}
remain invariant over time. Given that the solution in this example is smooth, it must satisfy all three conservation laws. Thus, the invariants $\mathfrak{I}_1$, $\mathfrak{I}_2$, and $\mathfrak{I}_3$ remain constant for the solitary wave solution, serving as a benchmark for evaluating the conservation properties of numerical methods. To validate this property, we conducted numerical experiments, limiting the domain of integration to $[0, 30]$ due to the decay of the solitary wave solution outside this interval. The analytical expressions for these invariants, provided in \cite{gardner1997simulations}, are given by
\begin{equation}
    \mathfrak{I}_1 = \dfrac{6c}{k}, \quad \mathfrak{I}_2 = \dfrac{12c^2}{k} + \dfrac{48kc^2 \delta}{5}, \quad \mathfrak{I}_3 = \dfrac{144c^3}{5k}.
\end{equation}
For $c=0.03$, the exact values of $\mathfrak{I}_1$, $\mathfrak{I}_2$, and $\mathfrak{I}_3$ are $0.36$, $0.02592$, and $1.5552 \times 10^{-3}$, respectively. Table~\ref{Table:E3b} summarizes the numerical values obtained for these invariants and their respective percentage errors, showing that $\mathfrak{I}_1$, $\mathfrak{I}_2$, and $\mathfrak{I}_3$ remain effectively constant throughout the simulation. At $t=25$, the relative changes in the invariants $\mathfrak{I}_1$, $\mathfrak{I}_2$, and $\mathfrak{I}_3$ for this method are 2.3237e-03\%, 1.4909e-04\%, and 2.3542e-04\%, respectively. The close agreement between the numerical results and the analytical values supports the time invariance of these integrals, validating the conservation properties of the method used.
\begin{table}[htbp!]
\centering
\captionof{table}{Errors and order of convergence for Example \ref{example:3}}
\setlength{\tabcolsep}{0pt}
\begin{tabular*}{\textwidth}{@{\extracolsep{\fill}} l *{8}{c} }
\toprule
 \textbf{N} & $\boldsymbol{L^{\infty}}$\textbf{-error} & \textbf{Rate} & $\boldsymbol{L^{1}}$\textbf{-error} & \textbf{Rate} & $\boldsymbol{L^{2}}$\textbf{-error} & \textbf{Rate}\\
\midrule
 40& 5.9911e-04 & - & 1.1793e-04 & - & 1.8701e-04 & - \\
 80& 8.6472e-06 & 6.1144 & 1.6380e-06 & 6.1698& 2.7540e-06 & 6.0855 \\
 160& 1.3224e-07 & 6.0310 & 2.4203e-08 & 6.0807& 4.1772e-08 & 6.0428 \\
\bottomrule
\end{tabular*}\label{Table:E3a}
\end{table}

\begin{table}[htbp!]
\centering
\captionof{table}{Invariants for a single solitary wave in Example \ref{example:3} over the domain $\Omega = [0, 30]$ with $N=120$.}
\setlength{\tabcolsep}{0pt}
\begin{tabular*}{\textwidth}{@{\extracolsep{\fill}} l *{8}{c} }
\toprule
\textbf{Time} &
$\mathfrak{I}_1$ & $\%$ Error of $\mathfrak{I}_1$ & $\mathfrak{I}_2$ & $\%$ Error of $\mathfrak{I}_2$ & $\mathfrak{I}_3$ & $\%$ Error of $\mathfrak{I}_3$\\
\midrule
  5& 3.5998e-01 & 4.2334e-03 & 2.5920e-02 & 2.6234e-05 & 1.5552e-03 & 4.7084e-05\\
 10& 3.5999e-01 & 3.6438e-03 & 2.5920e-02 & 5.6620e-05 & 1.5552e-03 & 9.4169e-05\\
 15& 3.5999e-01 & 3.1363e-03 & 2.5920e-02 & 8.7267e-05 & 1.5552e-03 & 1.4125e-04\\
 20& 3.5999e-01 & 2.6996e-03 & 2.5920e-02 & 1.1811e-04 & 1.5552e-03 & 1.8834e-04\\
 25& 3.5999e-01 & 2.3237e-03 & 2.5920e-02 & 1.4909e-04 & 1.5552e-03 & 2.3542e-04\\
\bottomrule
\end{tabular*}\label{Table:E3b}
\end{table}

\begin{figure}[htbp!]
  \begin{minipage}[b]{0.45\linewidth}
    \centering
    \includegraphics[trim=0cm 0cm 0cm 0cm, clip=true, width=\linewidth]{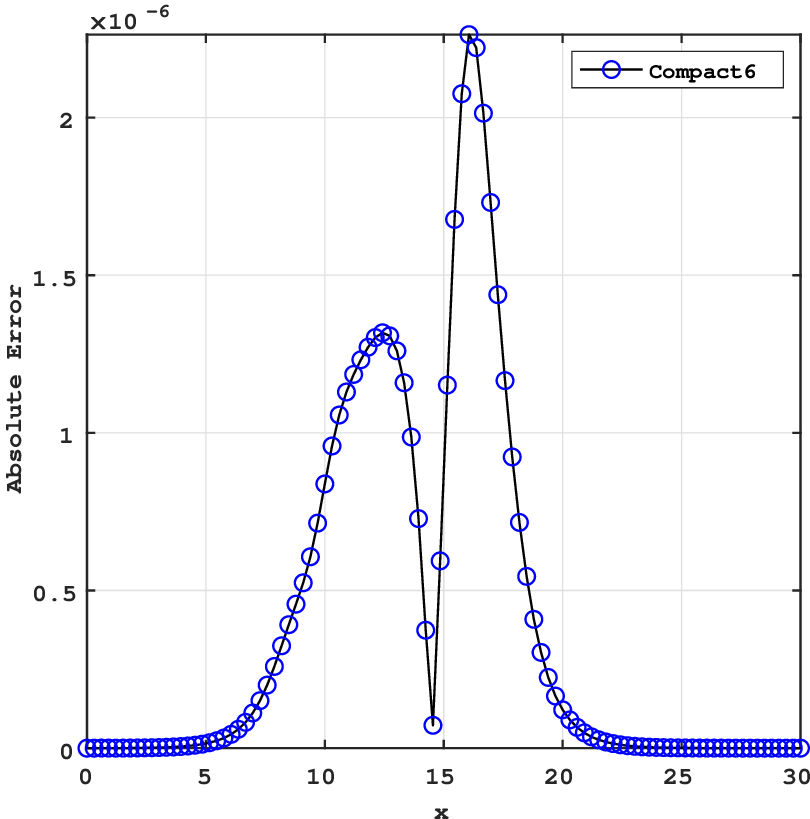}
    \captionsetup{justification=centering}
    \subcaption*{\normalsize{(a) Absolute Error Distribution}}
  \end{minipage}\hfill
  \begin{minipage}[b]{0.45\linewidth}
    \centering
    \includegraphics[trim=0cm 0cm 0cm 0cm, clip=true, width=\linewidth]{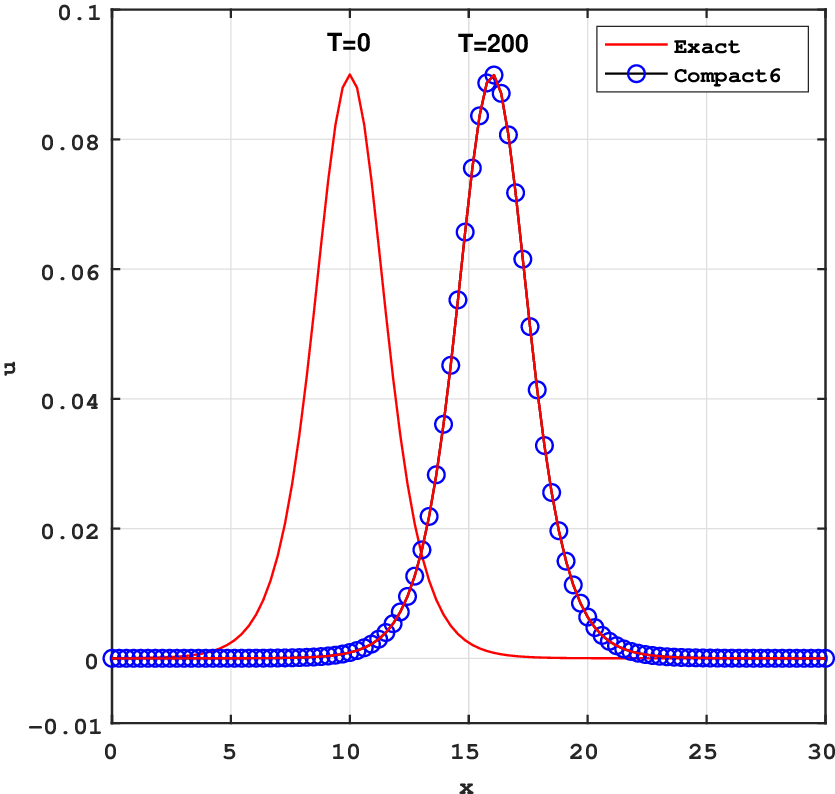}
    \captionsetup{justification=centering}
    \subcaption*{\normalsize{(b) Numerical vs. Exact Solution}}
  \end{minipage}
  \caption{(a) Absolute error distribution for the Compact6 method in Example \ref{example:3} at $T = 200$. (b) Comparison of the numerical solution from the Compact6 method (o symbols) with the exact solution (solid line) at $T = 200$ with $N=100$.}\label{Figure:E3a}
\end{figure}

\begin{figure}[htbp!]  
    \centering
    \begin{minipage}[b]{0.45\linewidth}
      \includegraphics[width=\linewidth]{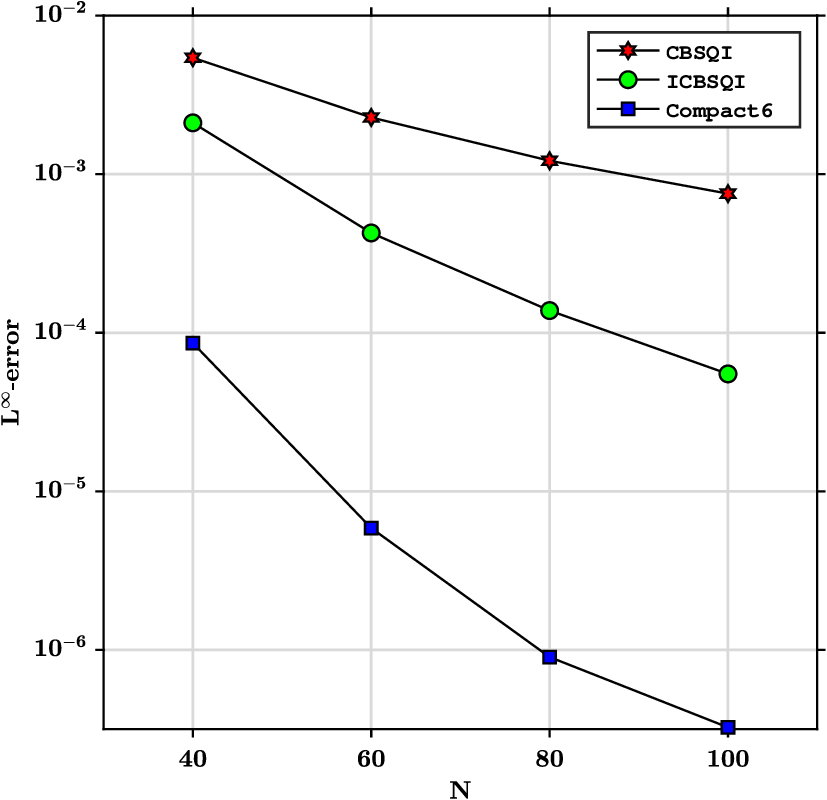}
    \end{minipage}\hfill
    \caption{ Comparison of CBSQI, ICBSQI and Compact6 schemes in terms of $L^{\infty}$ errors (in $log10$ scale) for Example~\ref{example:3} at $T=200$ and $\tau =0.0001$.}
    \label{Figure:E3b} 
\end{figure}
\end{example}
\begin{example}\label{example:4}
\normalfont
(Interaction of two solitary waves) \\
To analyze the interaction of two solitary waves, we consider Eq. (\ref{eqn:EW}) with the following initial and boundary conditions.
\begin{equation}\label{IC1:4}
\begin{split}
     u(x,0) &=3 \sum_{j=1}^2 c_j sech^2\Bigl(k_j(x-x_j)\Bigr), \quad x \in [0,70]\\
     u(0,t) &= 0 \quad \text{and} \quad u(70,t) = 0.
\end{split}
\end{equation}
For the numerical simulations, the parameters are set as  $k_1=k_2=0.5$, $\delta = 1$, $c_1=0.2$, $c_2=0.4$, $x_1 = 10$, $x_2 = 25$. The analytical values \cite{raslan2004computational} of the three invariants in (\ref{Invariants}) can be calculated as follows:
\begin{equation}
    \mathfrak{I}_1 = 12(c_1 + c_2) = 7.2 , \quad \mathfrak{I}_2 = 28.8(c_1^2 + c_2^2) = 5.76, \quad \mathfrak{I}_3 = 57.6(c_1^3 + c_2^3) = 4.1472.
\end{equation}
Table~\ref{Table:E4} presents the invariant values calculated at time points representing the pre-interaction phase ($T = 10$), interaction stages ($T = 45$, 55, and 65), and post-interaction phase ($T = 100$). The relative changes in the invariants  $\mathfrak{I}_1$, $\mathfrak{I}_2$, and $\mathfrak{I}_3$  at $t=25$ for the current method are 3.3634e-03\%, 5.5535e-02\%, and 8.9002e-02\%, respectively. The solution profiles at various time instances are shown in Fig.~\ref{Figure:E4a}. These graphs illustrate that, over time, the wave with the larger amplitude overtakes and moves ahead of the smaller amplitude wave.

\begin{table}[htbp!]
\centering
\captionof{table}{Invariants for the interaction of two solitary waves in Example \ref{example:4} over the domain $\Omega = [0, 70]$ with $N=300$.}
\setlength{\tabcolsep}{0pt}
\begin{tabular*}{\textwidth}{@{\extracolsep{\fill}} l *{8}{c} }
\toprule
\textbf{Time} &
$\mathfrak{I}_1$ & $\%$ Error of $\mathfrak{I}_1$ & $\mathfrak{I}_2$ & $\%$ Error of $\mathfrak{I}_2$ & $\mathfrak{I}_3$ & $\%$ Error of $\mathfrak{I}_3$\\
\midrule
 10& 7.2002 & 3.2146e-03 & 5.7604 & 6.5176e-03 & 4.1476 & 1.0761e-02\\
 45& 7.2002 & 3.3619e-03 & 5.7616 & 2.8086e-02 & 4.1491 & 4.5583e-02\\
 55& 7.2002 & 3.3632e-03 & 5.7619 & 3.2720e-02 & 4.1494 & 5.2814e-02\\
 65& 7.2002 & 3.3644e-03 & 5.7621 & 3.5805e-02 & 4.1496 & 5.7325e-02\\
 100& 7.2002 & 3.3634e-03 & 5.7632 & 5.5535e-02 & 4.1509 & 8.9002e-02\\
 \bottomrule
\end{tabular*}\label{Table:E4}
\end{table}

\begin{figure}[htbp!]
  \begin{minipage}[b]{0.45\linewidth}
    \centering
    \includegraphics[trim=0cm 0cm 0cm 0cm, clip=true, width=\linewidth]{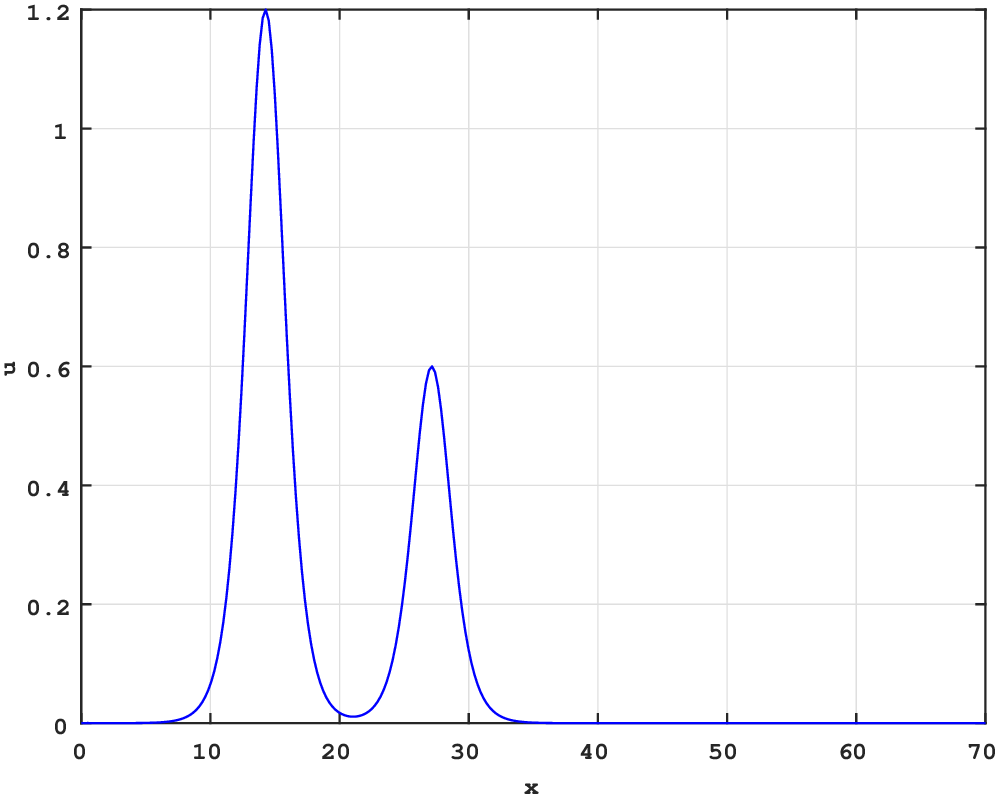}
    \captionsetup{justification=centering}
    \subcaption*{\normalsize{(a) $T=10$}}
  \end{minipage}\hfill
  \begin{minipage}[b]{0.45\linewidth}
    \centering
    \includegraphics[trim=0cm 0cm 0cm 0cm, clip=true, width=\linewidth]{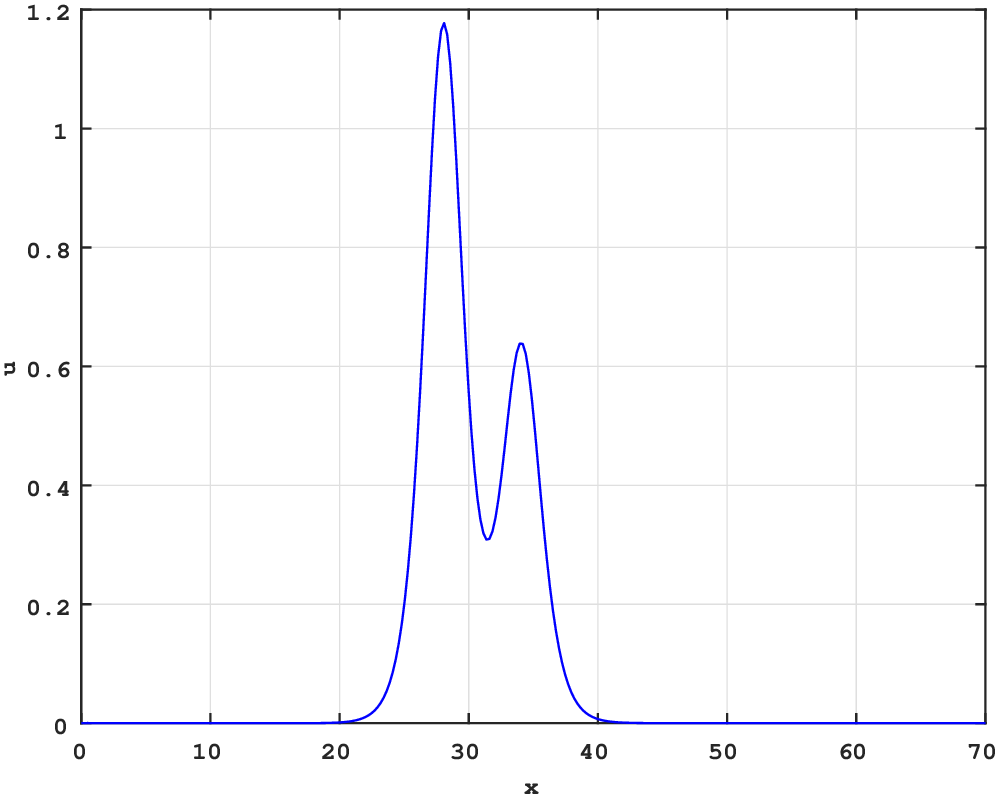}
    \captionsetup{justification=centering}
    \subcaption*{\normalsize{(b) $T=45$}}
  \end{minipage}
  \begin{minipage}[b]{0.45\linewidth}
    \centering
    \includegraphics[trim=0cm 0cm 0cm 0cm, clip=true, width=\linewidth]{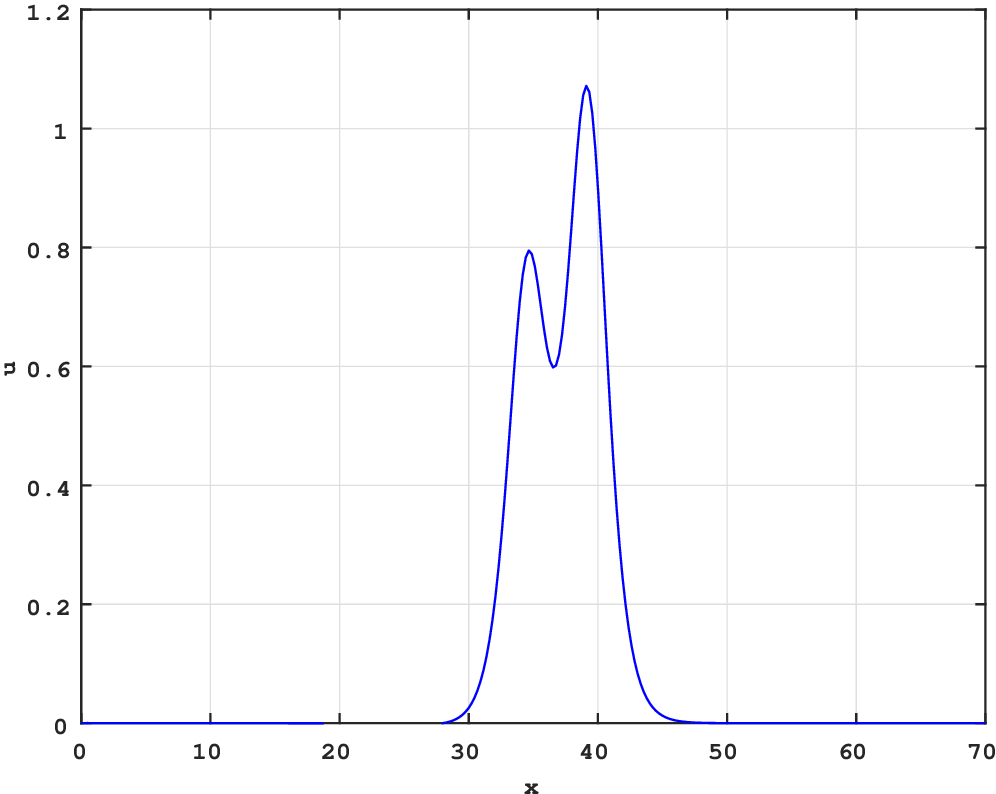}
    \captionsetup{justification=centering}
    \subcaption*{\normalsize{(c) $T=65$}}
  \end{minipage}\hfill
  \begin{minipage}[b]{0.45\linewidth}
    \centering
    \includegraphics[trim=0cm 0cm 0cm 0cm, clip=true, width=\linewidth]{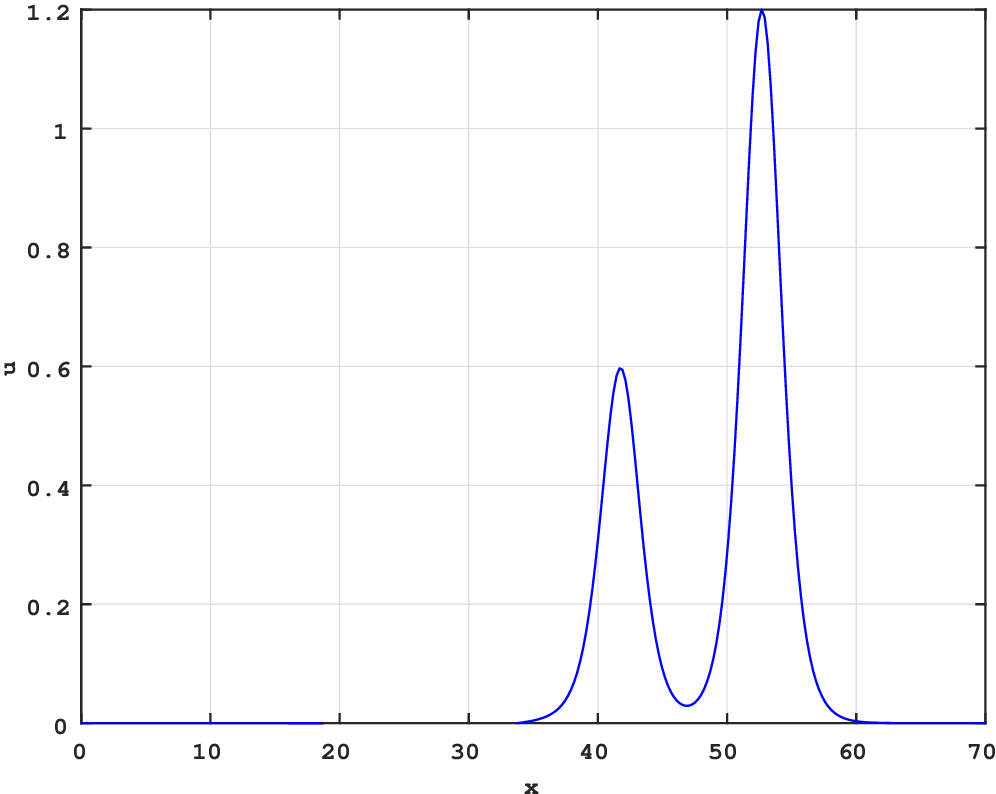}
    \captionsetup{justification=centering}
    \subcaption*{\normalsize{(d) $T=100$}}
  \end{minipage}
  \begin{minipage}[b]{0.45\linewidth}
    \centering
    \includegraphics[trim=0cm 0cm 0cm 0cm, clip=true, width=\linewidth]{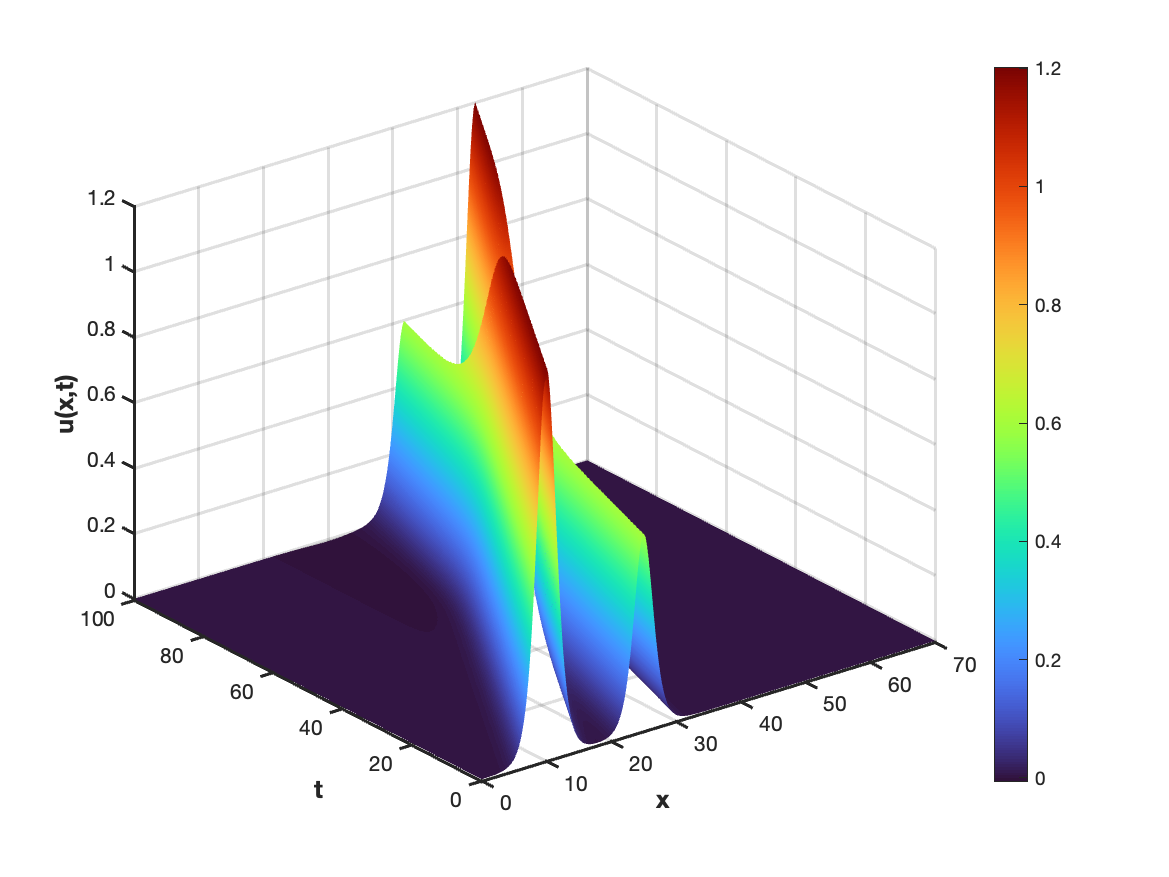}
    \captionsetup{justification=centering}
    \subcaption*{\normalsize{(e) Wave simulation across various time}}
  \end{minipage}\hfill
  \begin{minipage}[b]{0.45\linewidth}
    \centering
    \includegraphics[trim=0cm 0cm 0cm 0cm, clip=true, width=\linewidth]{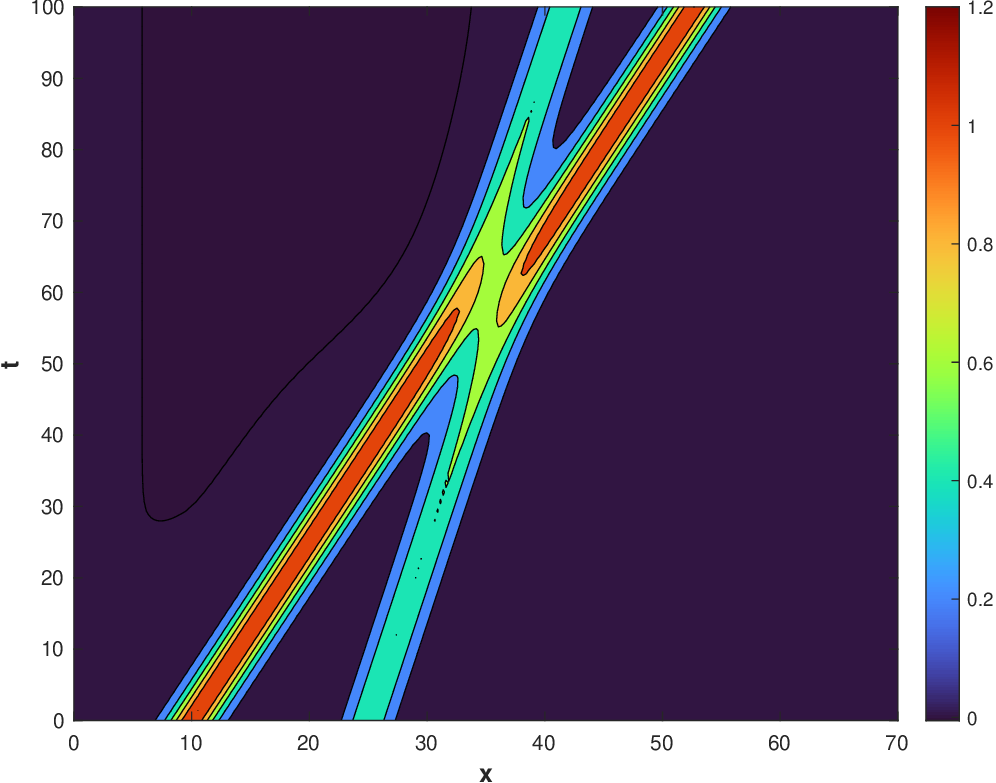}
    \captionsetup{justification=centering}
    \subcaption*{\normalsize{(f) Contour for wave simulation}}
  \end{minipage}
  \caption{Numerical solution of EW equation with $N=300$ and $0 \leq T\leq 100$ for Example \ref{example:4}.}\label{Figure:E4a}
\end{figure}
\end{example}
\begin{example}\label{example:4b}
\normalfont
(Interaction of three solitary waves) \\
The initial condition for Eq. (\ref{eqn:EW}) is defined as:
\begin{equation}\label{IC1:4b}
     u(x,0) =3 \sum_{j=1}^3 c_j sech^2\Bigl(k_j(x-x_j)\Bigr), \quad x \in [-10,100],
\end{equation}
which characterizes a combination of three solitary waves traveling in the same direction. We impose the boundary conditions \( u(a, 0) = 0 \) and $ u(b, 0) = 0 $, with $ a = -10 $ and $ b = 100 $. The time interval is taken as $ t \in [0, 15] $, and we use $ N = 600 $ evenly spaced points across this spatial domain. For the parameters, we set $ k_i = 0.5 $ (for $ i = 1, 2, 3 $), $ c_1 = 4.5 $, $ c_2 = 1.5 $, $ c_3 = 0.5 $, and positions $ x_1 = 10 $, $ x_2 = 25 $, $ x_3 = 35 $. Calculated values for the three conserved quantities are listed in Table~\ref{Table:E5}, showing that they closely match their theoretical values. Specifically, the invariants are given as follows:
\begin{equation}
    \mathfrak{I}_1 = 12(c_1 + c_2 + c_3) = 78 , \quad \mathfrak{I}_2 = 28.8(c_1^2 + c_2^2 + c_3^2) = 655.2, \quad \mathfrak{I}_3 = 57.6(c_1^3 + c_2^3 + c_3^3) = 5450.4.
\end{equation}
The results confirm that the invariants are well-preserved. The interactions of the three solitary waves are depicted in Fig.~\ref{Figure:E5}.
\begin{table}[htbp!]
\centering
\captionof{table}{Invariants for the interaction of three solitary waves in Example \ref{example:4b} over the domain $\Omega = [-10, 100]$ with $N=600$.}
\setlength{\tabcolsep}{0pt}
\begin{tabular*}{\textwidth}{@{\extracolsep{\fill}} l *{8}{c} }
\toprule
\textbf{Time} &
$\mathfrak{I}_1$ & $\%$ Error of $\mathfrak{I}_1$ & $\mathfrak{I}_2$ & $\%$ Error of $\mathfrak{I}_2$ & $\mathfrak{I}_3$ & $\%$ Error of $\mathfrak{I}_3$\\
\midrule
  1& 78.0000 & 2.0158e-07 & 655.3505 & 2.2971e-02 & 5452.2761 & 3.4422e-02\\
 5& 78.0000 & 2.0545e-07 & 655.7636 & 8.6019e-02 & 5457.9016 & 1.3763e-01\\
 10& 78.0000 & 2.0545e-07 & 656.3525 & 1.7591e-01 & 5465.9593 & 2.8547e-01\\
 15& 78.0000 & 8.7733e-08 & 657.0034 & 2.7525e-01 & 5474.6982 & 4.4581e-01\\
 \bottomrule
\end{tabular*}\label{Table:E5}
\end{table}

\begin{figure}[htbp!]
  \begin{minipage}[b]{0.45\linewidth}
    \centering
    \includegraphics[trim=0cm 0cm 0cm 0cm, clip=true, width=\linewidth]{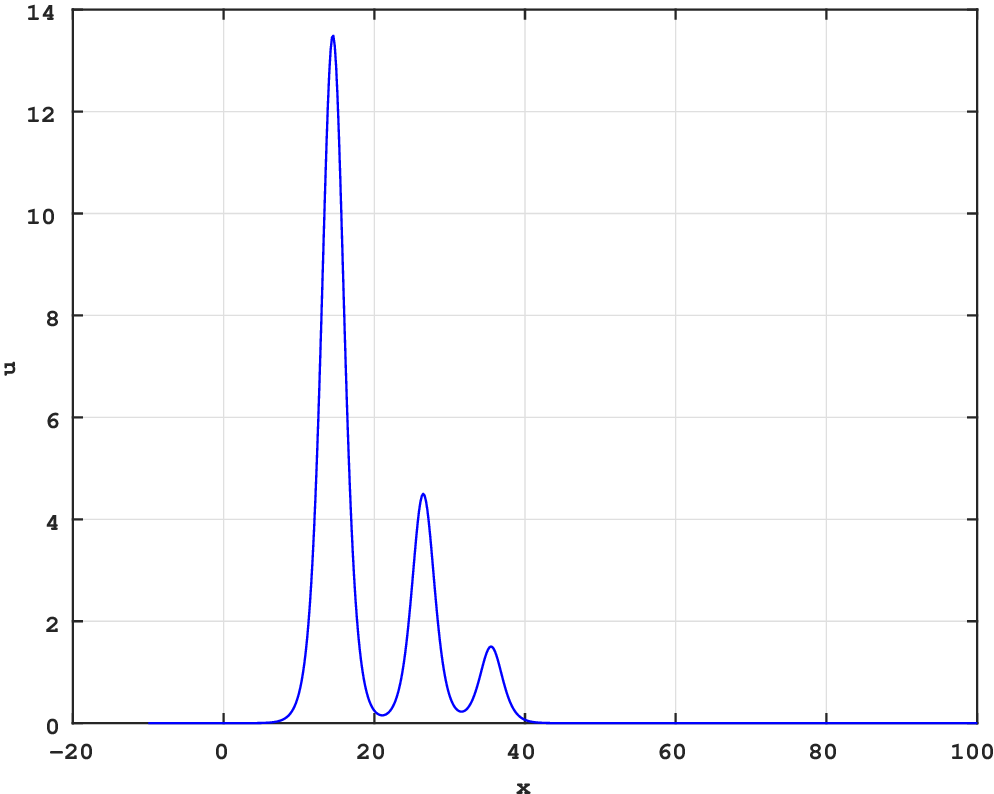}
    \captionsetup{justification=centering}
    \subcaption*{\normalsize{(a) $T=1$}}
  \end{minipage}\hfill
  \begin{minipage}[b]{0.45\linewidth}
    \centering
    \includegraphics[trim=0cm 0cm 0cm 0cm, clip=true, width=\linewidth]{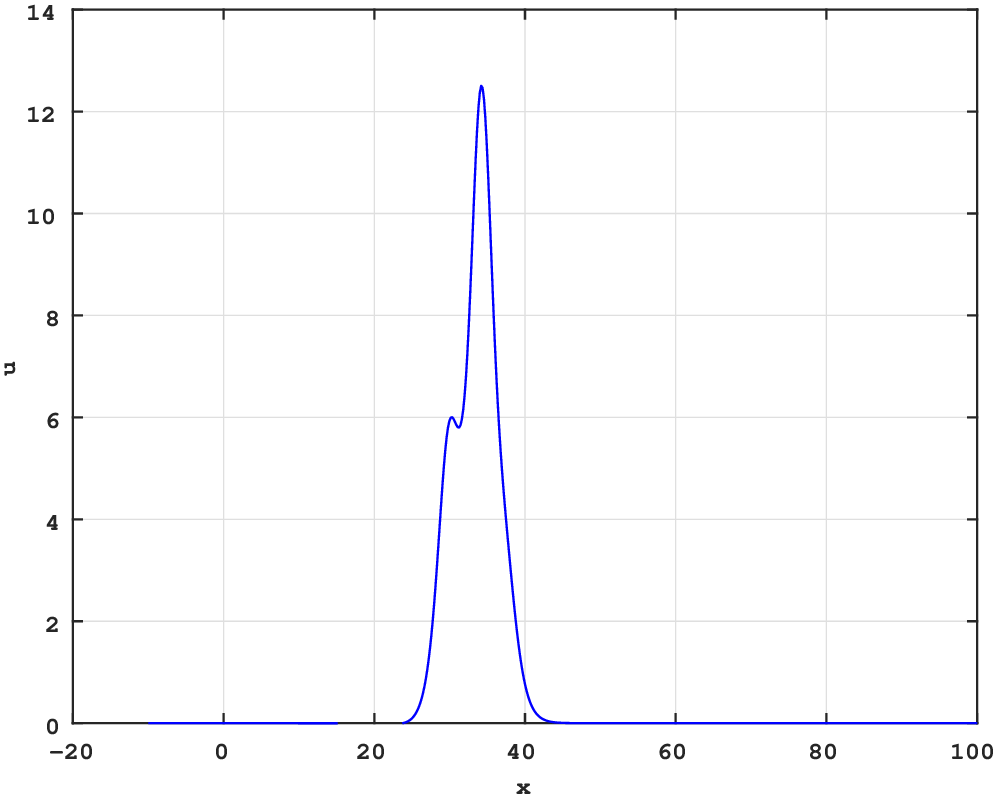}
    \captionsetup{justification=centering}
    \subcaption*{\normalsize{(b) $T=5$}}
  \end{minipage}
  \begin{minipage}[b]{0.45\linewidth}
    \centering
    \includegraphics[trim=0cm 0cm 0cm 0cm, clip=true, width=\linewidth]{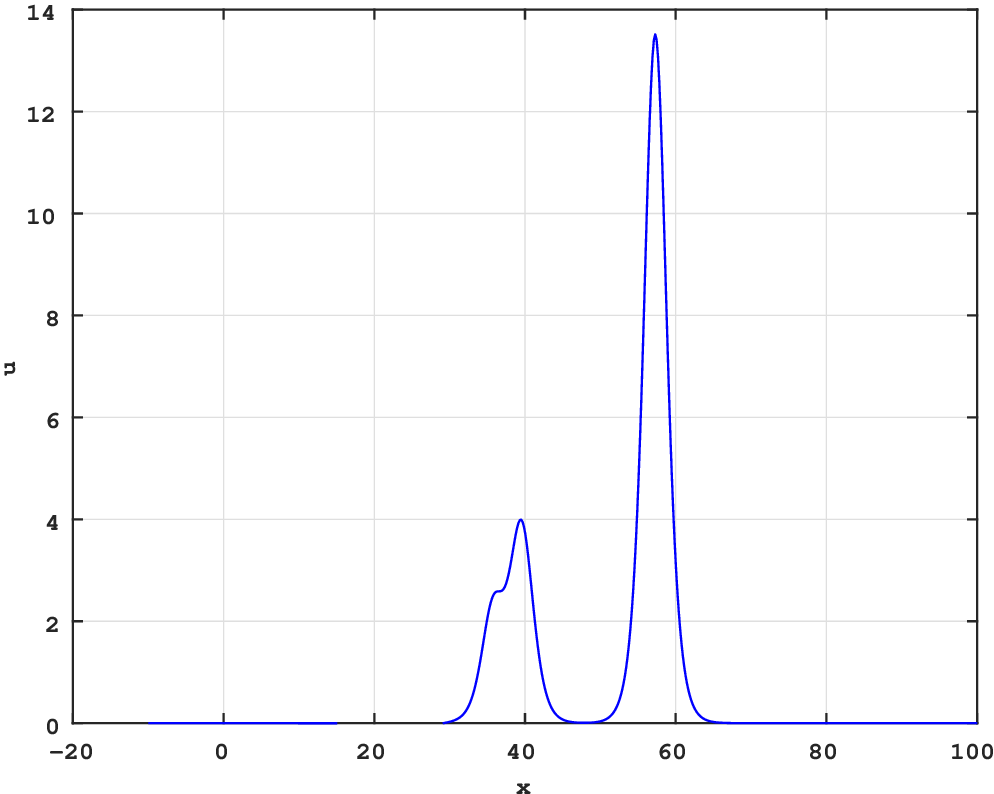}
    \captionsetup{justification=centering}
    \subcaption*{\normalsize{(c) $T=10$}}
  \end{minipage}\hfill
  \begin{minipage}[b]{0.45\linewidth}
    \centering
    \includegraphics[trim=0cm 0cm 0cm 0cm, clip=true, width=\linewidth]{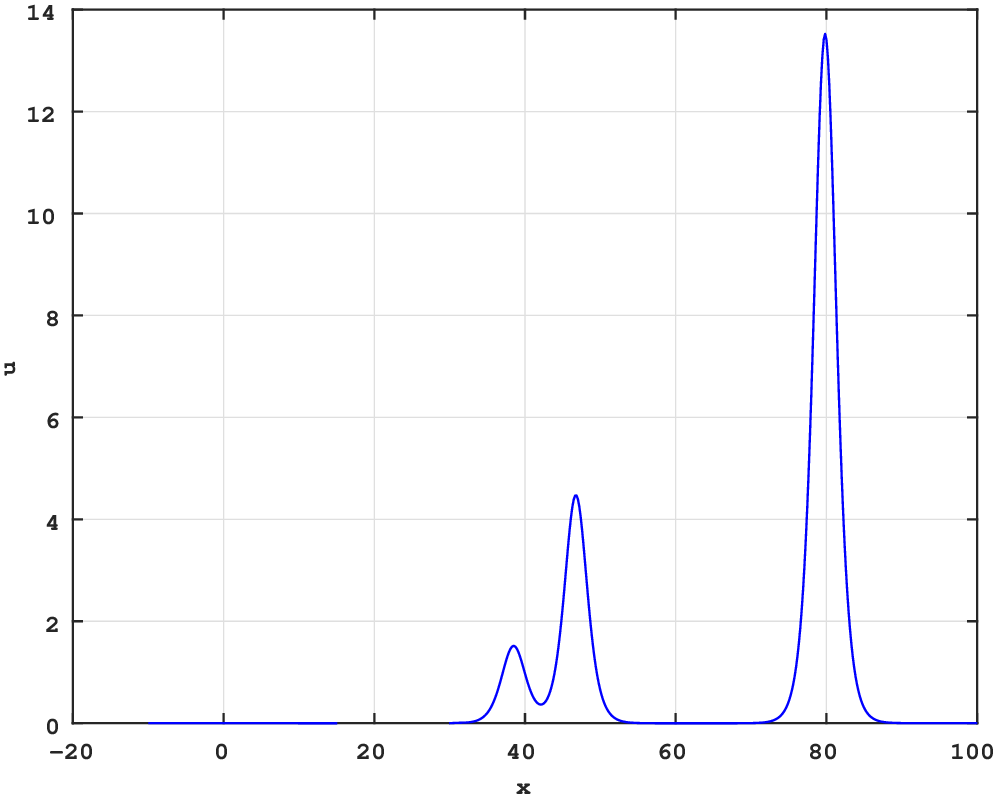}
    \captionsetup{justification=centering}
    \subcaption*{\normalsize{(d) $T=15$}}
  \end{minipage}
  \begin{minipage}[b]{0.45\linewidth}
    \centering
    \includegraphics[trim=0cm 0cm 0cm 0cm, clip=true, width=\linewidth]{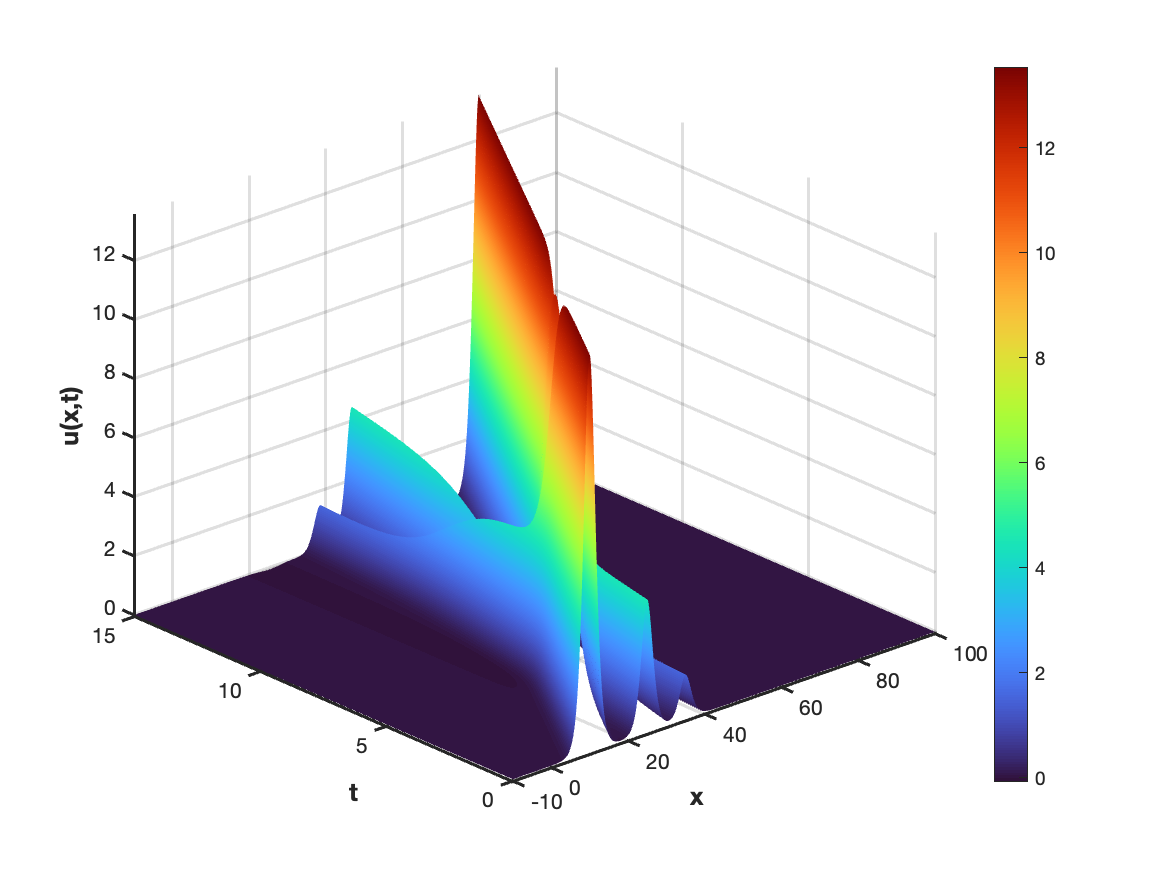}
    \captionsetup{justification=centering}
    \subcaption*{\normalsize{(e) Wave simulation across various time}}
  \end{minipage}\hfill
  \begin{minipage}[b]{0.45\linewidth}
    \centering
    \includegraphics[trim=0cm 0cm 0cm 0cm, clip=true, width=\linewidth]{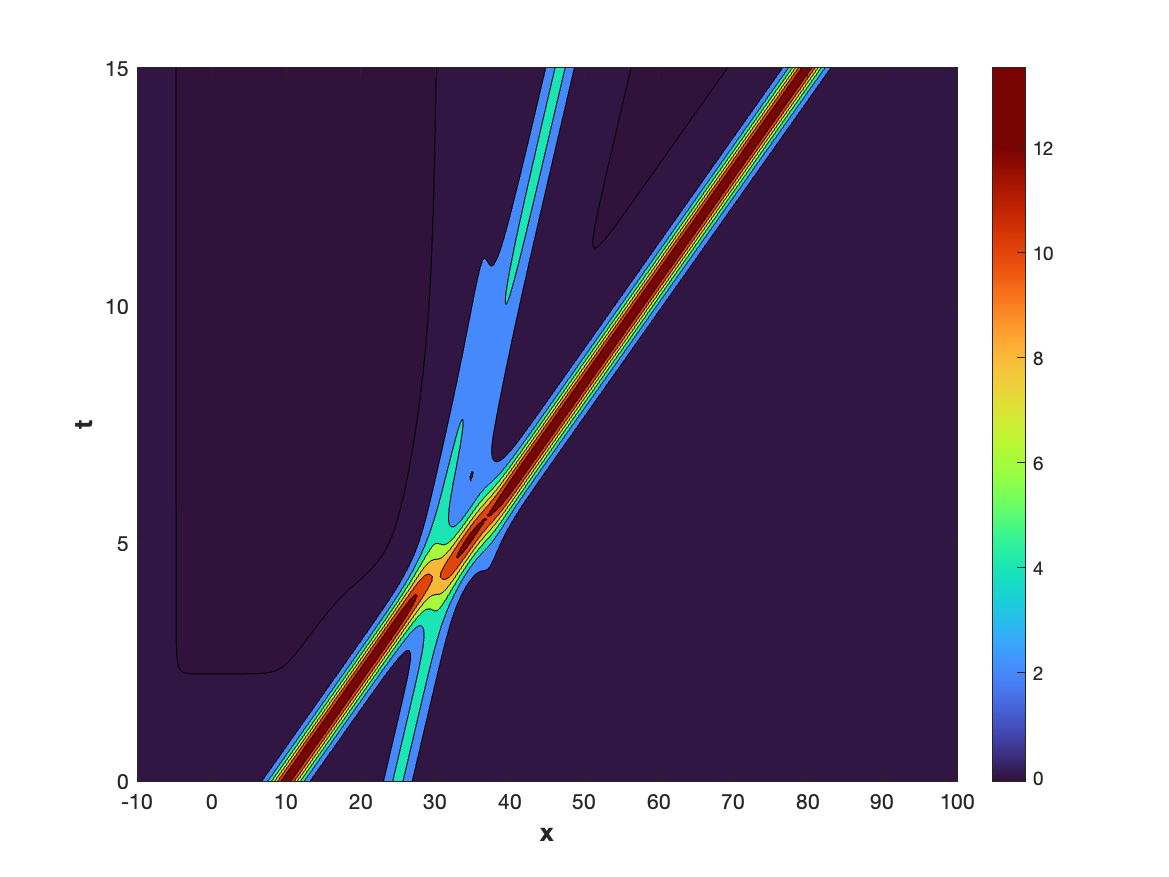}
    \captionsetup{justification=centering}
    \subcaption*{\normalsize{(f) Contour for wave simulation}}
  \end{minipage}
  \caption{Numerical solution of EW equation with $N=300$ and $0 \leq T\leq 15$ for Example \ref{example:4b}.}\label{Figure:E5}
\end{figure}
\end{example}
\begin{example}
\normalfont
(Undular Bore) \\
Next, the formation of an undular bore is examined using the following initial function, with the boundary condition 
\begin{align}\label{example:5}
    u(x,0) &= \dfrac{1}{2} u_0 \biggl(1 - \tanh \biggl(\dfrac{x - x_c}{d}\biggr)\biggr), \\
    u &\to 0 \text{ as } x \to \infty \text{ and } u \to u_0 \text{ as } x \to -\infty.
\end{align}
Here, $u(x,0)$ indicates the elevation of the water surface above the equilibrium position at $t=0$, the constants $u_0$, $d$, and $x_c$  represent the magnitude of the water level change, the slope between shallow and deep water, and the center of the disturbance, respectively. In this case, we set $u_0=0.1$, $d = 5$, $x_c=0$. The numerical solution is computed within the domain $-20 \leq x \leq 80$. As the wave moves forward, the nonlinear term $ u u_x $ in equation (\ref{eqn:EW}) causes a steepening effect, leading to an increase in $ u_x $. This behavior is evident in the numerical results at $ T = 70 $, calculated with the Compact6 method, as illustrated in Fig.~\ref{Figure:E6}(a). When the dispersive term is absent, the wave forms a shock at the critical time $ t^* = 100 $, and a weak solution with shock discontinuity persists for $ t > t^* $. However, with the inclusion of the dispersive term $ u_{xxt} $ in equation (\ref{eqn:EW}), the wave evolves into an undular bore, as seen in Fig.~\ref{Figure:E6}(b). These simulations demonstrate that the Compact6 method effectively captures the dispersive effects.
\par
The integrals $\mathfrak{I}_1, \mathfrak{I}_2$, and $\mathfrak{I}_3$ defined in equation (\ref{Invariants}), are no longer constant because the fluxes $\chi$ do not meet the conditions outlined in equation (\ref{Invariant:condition}). Table~\ref{Table:E6} shows the values of these integrals, along with the position and amplitude (equal to $\|u\|_\infty$) of the leading undulation at various time levels. The time-dependent variation of integrals $\mathfrak{I}_1, \mathfrak{I}_2$, and $\mathfrak{I}_3$ can be analytically computed and is expressed by
\begin{equation}
    \begin{split}
        M_1 &= \dfrac{d\mathfrak{I}_1}{dt} = \dfrac{d}{dt} \int_{-\infty}^{\infty} u \, dx = \dfrac{1}{2} u_0^2 = 5 \times 10^{-3},\\
        M_2 &= \dfrac{d\mathfrak{I}_2}{dt} = \dfrac{d}{dt} \int_{-\infty}^{\infty} (u^2 + \delta u_x^2) \, dx = \dfrac{2}{3} u_0^3 = 6.6667 \times 10^{-4},\\
        M_3 &= \dfrac{d\mathfrak{I}_3}{dt} = \dfrac{d}{dt} \int_{-\infty}^{\infty} u^3 \, dx = \dfrac{3}{4} u_0^4 = 7.5 \times 10^{-5}.
    \end{split}
\end{equation}
The results obtained from the Compact6 method for the time rate of change of the integrals are in close agreement with the analytical values. The calculated values are $M_1 = 4.9994 \times 10^{-3}$, $M_2 = 6.6658 \times 10^{-4}$, and $M_3 = 7.4985 \times 10^{-5}$.
\begin{table}[htbp!]
\centering
\captionof{table}{Development of an undular bore $\delta =1$, $u_0=0.1$, $d = 5$, $-20\leq x \leq 80$, $N=200$: $\mathfrak{I}_1, \mathfrak{I}_2, \mathfrak{I}_3$ and the position $x$ and amplitude $\|u\|_\infty$ of the leading undulation.}
\setlength{\tabcolsep}{0pt}
\begin{tabular*}{\textwidth}{@{\extracolsep{\fill}} l *{8}{c} }
\toprule
Time & $\mathfrak{I}_1$ & $\mathfrak{I}_2$ & $\mathfrak{I}_3$ & $x$ & $\|u\|_\infty$ \\
\midrule
 150& 2.749873 & 0.284106 & 0.027498 & 4.00 & 0.123733\\
 200& 2.999882 & 0.317460 & 0.031252 & 6.50 & 0.142425\\
 300& 3.499897 & 0.384223 & 0.038767 & 12.00 & 0.165481\\
 400& 3.999897 & 0.451066 & 0.046296 & 17.50 & 0.170815\\
 600& 4.999896 & 0.585010 & 0.061398 & 29.50 & 0.178813\\
 800& 5.999897 & 0.719309 & 0.076557 & 41.50 & 0.181478\\
\bottomrule
\end{tabular*}\label{Table:E6}
\end{table}
\begin{figure}[htbp!]
  \begin{minipage}[b]{0.45\linewidth}
    \centering
    \includegraphics[trim=0cm 0cm 0cm 0cm, clip=true,width=\linewidth]{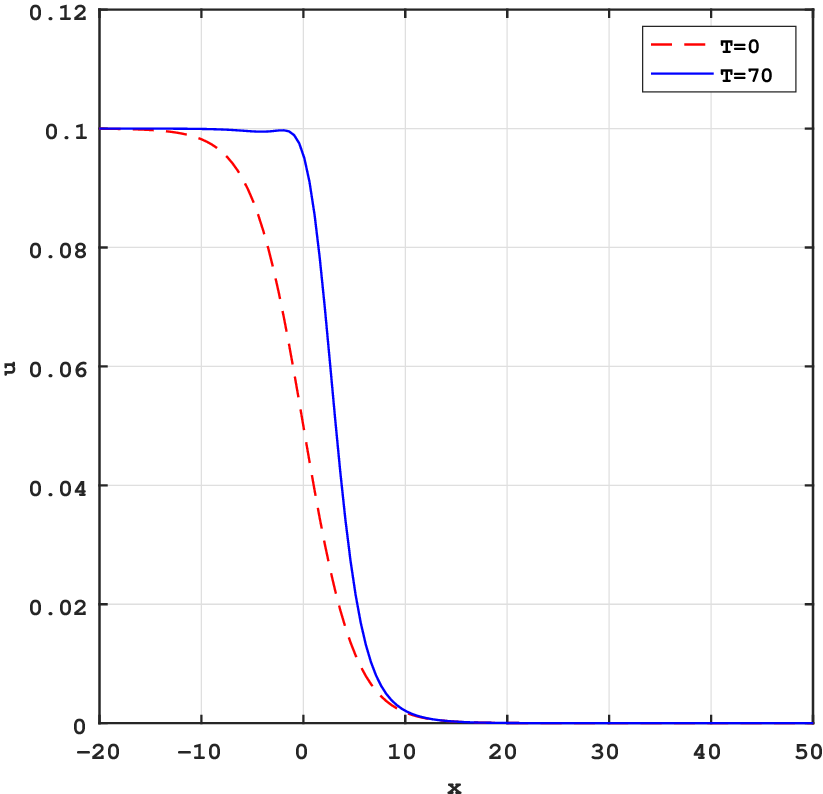}
    \subcaption*{\normalsize{\centering (a) $T = 70$ }}
  \end{minipage}\hfill
    \begin{minipage}[b]{0.45\linewidth}
    \centering
    \includegraphics[trim=0cm 0cm 0cm 0cm, clip=true,width=\linewidth]{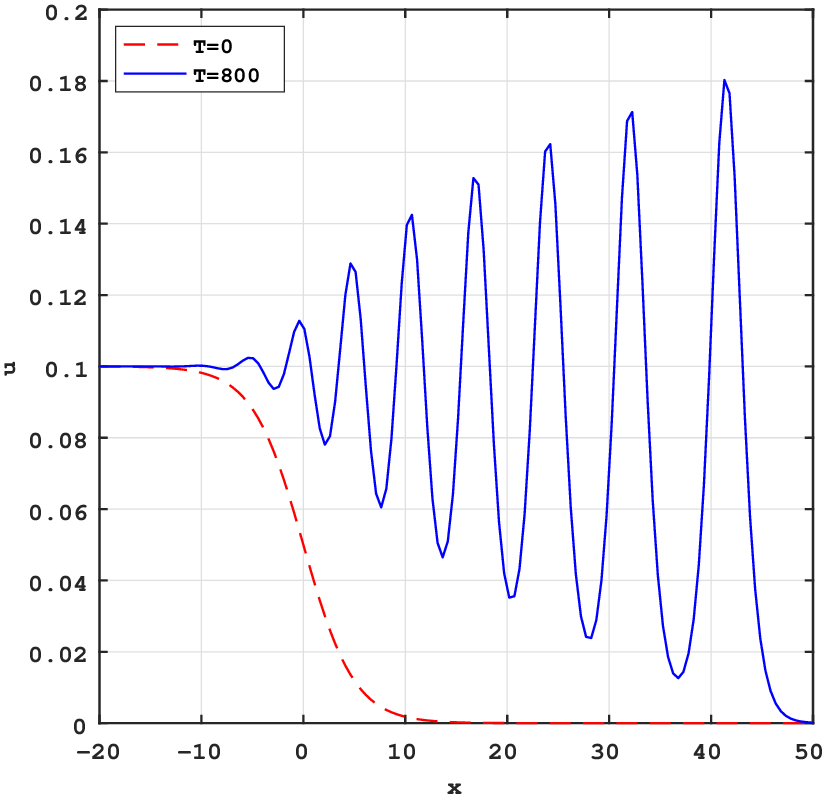}
    \subcaption*{\normalsize{\centering (b) $T = 800$}}
  \end{minipage}\hfill
  \caption{Numerical solution of EW equation with $N=200$ for Example \ref{example:5}.}\label{Figure:E6}
\end{figure}

\end{example}

\begin{example}\label{example:6}
\normalfont
Here, we examine the inhomogeneous BBMB equation (\ref{eqn:BBMB}), using the initial condition
\begin{equation}
    u(x,0) = u(x,0) = sech(x),  \quad -10<x<10,
\end{equation}
and the boundary conditions
\begin{equation}
    u(a,t) = sech(a-t), \quad u(b,t) = sech(b-t), \quad 0<t<T.
\end{equation}
The forcing term $g$ is chosen as
\begin{equation}
    \mathrm{g}(x,t) = \biggl(1 - 6 tanh^3(x-t) -2 tanh^2(x-t) - (sech(x-t)-5) tanh(x-t)\biggr) sech(x-t).
\end{equation}
following the formulation proposed by Dehghan et al.\cite{dehghan2014numerical}, which yields the exact solution $u(x,t) = sech(x-t)$. Using the Compact6 method, we approximate the solution at $ T = 1 $ with the interval $[-10, 10]$ and the specified initial and boundary conditions. The error analysis, presented in Table~\ref{Table:E7}, demonstrates the accuracy of our results in $ L^1 $, $ L^2 $, and $ L^{\infty} $ norms, showing that the method achieves sixth-order accuracy. 
Fig.~\ref{Figure:E7a} illustrates the comparison of $L^{\infty}$ errors at the final time \( T = 1 \), using a fixed time step size \( \tau = 10^{-4} \). The Compact6 scheme consistently yields lower errors than the CBSQI and ICBSQI methods, reinforcing its high precision and suitability for accurately solving the problem.
%
\begin{table}[htbp!]
\centering
\captionof{table}{Errors and order of convergence for Example \ref{example:6}.}
\setlength{\tabcolsep}{0pt}
\begin{tabular*}{\textwidth}{@{\extracolsep{\fill}} l *{8}{c} }
\toprule
 \textbf{N} & $\boldsymbol{L^{\infty}}$\textbf{-error} & \textbf{Rate} & $\boldsymbol{L^{1}}$\textbf{-error} & \textbf{Rate} & $\boldsymbol{L^{2}}$\textbf{-error} & \textbf{Rate}\\
\midrule
40& 5.0752e-03 & - & 8.3908e-04 & - & 1.4121e-03 & - \\
80& 7.7037e-05 & 6.0418 & 1.1634e-05 & 6.1724& 2.0708e-05 & 6.0915 \\
160& 1.1704e-06 & 6.0404 & 1.7234e-07 & 6.0769& 3.1358e-07 & 6.0452 \\
 320 & 1.7996e-08 & 6.0232 & 2.6243e-09 & 6.0372& 4.8375e-09 & 6.0184 \\
\bottomrule
\end{tabular*}\label{Table:E7}
\end{table}

\begin{figure}[htbp!]  
    \centering
    \begin{minipage}[b]{0.45\linewidth}
      \includegraphics[width=\linewidth]{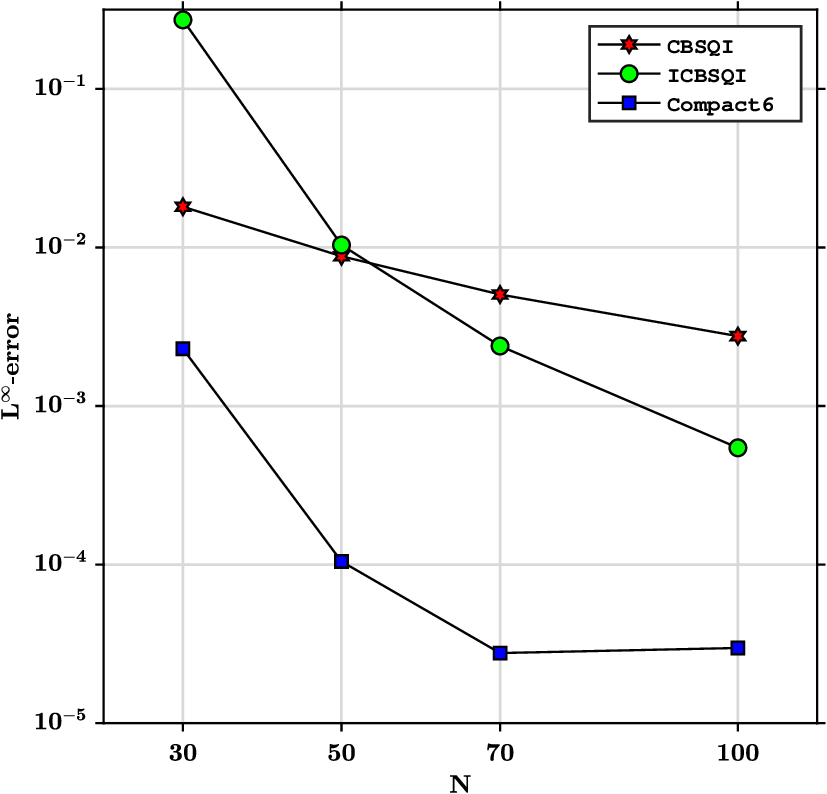}
    \end{minipage}\hfill
    \caption{ Comparison of CBSQI, ICBSQI and Compact6 schemes in terms of $L^{\infty}$ errors (in $log10$ scale) for Example~\ref{example:6} at $T=1$ and $\tau =0.0001$.}
    \label{Figure:E7a} 
\end{figure}

\end{example}

\section{Conclusion}\label{sec:5}
In this study, we developed a sixth-order compact finite difference scheme, Compact6, aimed at achieving high-accuracy numerical solutions for Sobolev-type equations. We began by outlining the Compact6 scheme for approximating both first and second-order derivatives, detailing the boundary treatments essential for practical implementation. The temporal derivatives are approximated using the explicit forward Euler difference method. The stability of the Compact6 method was rigorously analyzed in terms of  $L_2 $-stability, for the linear case. Using von Neumann stability analysis, we derived conditions under which the scheme remains stable and examined the amplification factor,  $\mathcal{C}(\theta)$, to ensure its decay properties. To validate the accuracy and effectiveness of the Compact6 scheme, we conducted extensive numerical experiments. The tests included cases of 1D and \textcolor{blue}{2D} advection-free flow, advection-diffusion flow, and applications involving the equal width equation—such as single solitary wave propagation, interactions of two and three solitary waves, and the formation of undular bores. Additionally, tests were conducted on the Benjamin–Bona–Mahony–Burgers equation.
\par
A comprehensive comparison of the $L^{\infty}$, $L^1$, and $L^2$ error norms and the associated convergence rates demonstrates that the proposed Compact6 scheme achieves approximately first-order temporal accuracy when coupled with the forward Euler method. The numerical results also consistently show that Compact6 significantly outperforms the existing CBSQI and ICBSQI schemes in terms of accuracy across various test problems and spatial resolutions. This superiority is evident not only in short-time simulations but also in long-time integrations, thereby affirming the robustness, stability, and high precision of the Compact6 approach for solving a wide range of Sobolev-type equations.\\
\newline
\noindent{\bf Acknowledgements }
The authors thank the unknown referee(s) for their careful reading of the manuscript and valuable suggestions that significantly improved the quality of the paper.
The author Rathan Samala is supported by NBHM, DAE, India (Ref. No. 02011/46/2021 NBHM(R.P.)/R \& D II/14874).\\
\newline
\noindent{\bf Declaration of competing interest }
The authors declare that they have no known competing financial interests or personal relationships that could have appeared to influence the work reported in this paper.\\
\newline
\noindent{\bf Data availability}
No data was used for the research described in the article.
\bibliographystyle{abbrv}
\bibliography{references}

\begin{thebibliography}{10}

\bibitem{abbasbandy2010first}
S.~Abbasbandy and A.~Shirzadi.
\newblock The first integral method for modified benjamin--bona--mahony
  equation.
\newblock {\em Communications in Nonlinear Science and Numerical Simulation},
  15(7):1759--1764, 2010.

\bibitem{achouri2006convergence}
T.~Achouri, N.~Khiari, and K.~Omrani.
\newblock On the convergence of difference schemes for the
  benjamin--bona--mahony (bbm) equation.
\newblock {\em Applied mathematics and computation}, 182(2):999--1005, 2006.

\bibitem{bahrami2011exact}
B.~S. Bahrami, H.~Abdollahzadeh, I.~Berijani, D.~Ganji, and M.~Abdollahzadeh.
\newblock Exact travelling solutions for some nonlinear physical models by
  (g`/g)-expansion method.
\newblock {\em Pramana}, 77:263--275, 2011.

\bibitem{barenblatt1960basic}
G.~I. Barenblatt, I.~P. Zheltov, and I.~Kochina.
\newblock Basic concepts in the theory of seepage of homogeneous liquids in
  fissured rocks [strata].
\newblock {\em Journal of applied mathematics and mechanics}, 24(5):1286--1303,
  1960.

\bibitem{bruzon2014weak}
M.~Bruz{\'o}n.
\newblock Weak self-adjointness and conservation laws for a family of
  benjamin-bona-mahony-burgers equations.
\newblock {\em Discontinuity and Complexity in Nonlinear Physical Systems},
  pages 23--34, 2014.

\bibitem{bruzon2012conservation}
M.~Bruz{\'o}n and M.~Gandarias.
\newblock Conservation laws for a family of benjamin-bona-mahony-burgers
  equations.
\newblock In {\em 2012 IEEE 4th International Conference on Nonlinear Science
  and Complexity (NSC)}, pages 155--160. IEEE, 2012.

\bibitem{bruzon2016conservation}
M.~Bruz{\'o}n, T.~Garrido, and R.~De~la Rosa.
\newblock Conservation laws and exact solutions of a generalized
  benjamin--bona--mahony--burgers equation.
\newblock {\em Chaos, Solitons \& Fractals}, 89:578--583, 2016.

\bibitem{dehghan2014numerical}
M.~Dehghan, M.~Abbaszadeh, and A.~Mohebbi.
\newblock The numerical solution of nonlinear high dimensional generalized
  benjamin--bona--mahony--burgers equation via the meshless method of radial
  basis functions.
\newblock {\em Computers \& Mathematics with Applications}, 68(3):212--237,
  2014.

\bibitem{esen2005numerical}
A.~Esen.
\newblock A numerical solution of the equal width wave equation by a lumped
  galerkin method.
\newblock {\em Applied mathematics and computation}, 168(1):270--282, 2005.

\bibitem{ewing1977coupled}
R.~E. Ewing.
\newblock A coupled non-linear hyperbolic-sobolev system.
\newblock {\em Annali di Matematica Pura ed Applicata}, 114:331--349, 1977.

\bibitem{ganji2009approximate}
Z.~Ganji, D.~Ganji, and H.~Bararnia.
\newblock Approximate general and explicit solutions of nonlinear bbmb
  equations by exp-function method.
\newblock {\em Applied Mathematical Modelling}, 33(4):1836--1841, 2009.

\bibitem{gao2017weak}
F.~Gao, J.~Cui, and G.~Zhao.
\newblock Weak galerkin finite element methods for sobolev equation.
\newblock {\em Journal of Computational and Applied Mathematics}, 317:188--202,
  2017.

\bibitem{gao2015modified}
F.~Gao and X.~Wang.
\newblock A modified weak galerkin finite element method for sobolev equation.
\newblock {\em Journal of Computational Mathematics}, pages 307--322, 2015.

\bibitem{gardner1997simulations}
L.~Gardner, G.~Gardner, F.~Ayoub, and N.~Amein.
\newblock Simulations of the ew undular bore.
\newblock {\em Communications in Numerical Methods in Engineering},
  13(7):583--592, 1997.

\bibitem{guo2012optimal}
C.~Guo and S.~Fang.
\newblock Optimal decay rates of solutions for a multidimensional generalized
  benjamin--bona--mahony equation.
\newblock {\em Nonlinear Analysis: Theory, Methods \& Applications},
  75(7):3385--3392, 2012.

\bibitem{hamdi2003exact}
S.~Hamdi, W.~H. Enright, W.~E. Schiesser, and J.~Gottlieb.
\newblock Exact solutions of the generalized equal width wave equation.
\newblock In {\em International Conference on Computational Science and Its
  Applications}, pages 725--734. Springer, 2003.

\bibitem{jena2023numerical}
S.~R. Jena and A.~Senapati.
\newblock On numerical soliton and convergence analysis of
  benjamin-bona-mahony-burger equation via octic b-spline collocation.
\newblock {\em Arab Journal of Basic and Applied Sciences}, 30(1):146--163,
  2023.

\bibitem{joshi2023numerical}
P.~Joshi, M.~Pathak, and J.~Lin.
\newblock Numerical study of generalized 2-d nonlinear
  benjamin--bona--mahony--burgers equation using modified cubic b-spline
  differential quadrature method.
\newblock {\em Alexandria Engineering Journal}, 67:409--424, 2023.

\bibitem{kumar2016b}
R.~Kumar and S.~Baskar.
\newblock B-spline quasi-interpolation based numerical methods for some sobolev
  type equations.
\newblock {\em Journal of computational and applied mathematics}, 292:41--66,
  2016.

\bibitem{kumar2020modified}
R.~Kumar, A.~Choudhary, and S.~Baskar.
\newblock Modified cubic b-spline quasi-interpolation numerical scheme for
  hyperbolic conservation laws.
\newblock {\em Applicable Analysis}, 99(1):158--179, 2020.

\bibitem{lele1992compact}
S.~K. Lele.
\newblock Compact finite difference schemes with spectral-like resolution.
\newblock {\em Journal of computational physics}, 103(1):16--42, 1992.

\bibitem{leveque2002finite}
R.~LeVeque.
\newblock Finite volume methods for hyperbolic problems, 2002.

\bibitem{morrison1984scattering}
P.~Morrison, J.~Meiss, and J.~Cary.
\newblock Scattering of regularized-long-wave solitary waves.
\newblock {\em Physica D: nonlinear phenomena}, 11(3):324--336, 1984.

\bibitem{noor2011some}
M.~A. Noor, K.~I. Noor, A.~Waheed, and E.~A. Al-Said.
\newblock Some new solitonary solutions of the modified benjamin--bona--mahony
  equation.
\newblock {\em Computers \& Mathematics with Applications}, 62(4):2126--2131,
  2011.

\bibitem{olver1979euler}
P.~J. Olver.
\newblock Euler operators and conservation laws of the bbm equation.
\newblock In {\em Mathematical Proceedings of the Cambridge Philosophical
  Society}, volume~85, pages 143--160. Cambridge University Press, 1979.

\bibitem{peregrine1966calculations}
D.~H. Peregrine.
\newblock Calculations of the development of an undular bore.
\newblock {\em Journal of Fluid Mechanics}, 25(2):321--330, 1966.

\bibitem{qinghua2012degenerate}
X.~Qinghua and C.~Zhengzheng.
\newblock Degenerate boundary layer solutions to the generalized
  benjamin-bonamahony-burgers equation.
\newblock {\em Acta Mathematica Scientia}, 32(5):1743--1758, 2012.

\bibitem{raslan2004computational}
K.~R. Raslan.
\newblock A computational method for the equal width equation.
\newblock {\em International Journal of Computer Mathematics}, 81(1):63--72,
  2004.

\bibitem{saka2006finite}
B.~Saka.
\newblock A finite element method for equal width equation.
\newblock {\em Applied Mathematics and Computation}, 175(1):730--747, 2006.

\bibitem{salas2010new}
A.~H. Salas, B.~A. Frias, et~al.
\newblock New periodic and soliton solutions for the generalized bbm and
  burgers--bbm equations.
\newblock {\em Applied Mathematics and Computation}, 217(4):1430--1434, 2010.

\bibitem{salian2024novel}
L.~V. Salian, S.~Rathan, and D.~Ghosh.
\newblock A novel central compact finite-difference scheme for third
  derivatives with high spectral resolution.
\newblock {\em arXiv preprint arXiv:2405.00569}, 2024.

\bibitem{salian2024exponential}
L.~V. Salian and R.~Samala.
\newblock Exponential approximation space reconstruction weighted essentially
  nonoscillatory scheme for dispersive partial differential equations.
\newblock {\em Mathematical Methods in the Applied Sciences}, 47(4):1823--1851,
  2024.

\bibitem{shi1990initial}
D.~Shi.
\newblock On the initial boundary value problem of nonlinear the equation of
  the moisture in soil.
\newblock {\em Acta Math. Appl. Sin}, 13(1):33--40, 1990.

\bibitem{strang1964accurate}
G.~Strang.
\newblock Accurate partial difference methods.
\newblock {\em Numerische mathematik}, 6(1):37--46, 1964.

\bibitem{strikwerda2004finite}
J.~C. Strikwerda.
\newblock {\em Finite difference schemes and partial differential equations}.
\newblock SIAM, 2004.

\bibitem{tari2007approximate}
H.~Tari and D.~Ganji.
\newblock Approximate explicit solutions of nonlinear bbmb equations by he's
  methods and comparison with the exact solution.
\newblock {\em Physics Letters A}, 367(1-2):95--101, 2007.

\bibitem{taylor1942research}
D.~W. Taylor.
\newblock {\em Research on consolidation of clays}.
\newblock Massachusetts Institute of Technology, 1942.

\bibitem{ting1974cooling}
T.~W. Ting.
\newblock A cooling process according to two-temperature theory of heat
  conduction.
\newblock {\em Journal of Mathematical Analysis and Applications},
  45(1):23--31, 1974.

\bibitem{xiao2013nonlinear}
Q.~Xiao and H.~Zhao.
\newblock Nonlinear stability of generalized benjamin--bona--mahony--burgers
  shock profiles in several dimensions.
\newblock {\em Journal of Mathematical Analysis and Applications},
  406(1):165--187, 2013.

\bibitem{yin2010exponential}
H.~Yin and J.~Hu.
\newblock Exponential decay rate of solutions toward traveling waves for the
  cauchy problem of generalized benjamin--bona--mahony--burgers equations.
\newblock {\em Nonlinear Analysis: Theory, Methods \& Applications},
  73(6):1729--1738, 2010.

\bibitem{zhu2010numerical}
C.-G. Zhu and W.-S. Kang.
\newblock Numerical solution of burgers--fisher equation by cubic b-spline
  quasi-interpolation.
\newblock {\em Applied Mathematics and Computation}, 216(9):2679--2686, 2010.

\end{thebibliography}
\end{document}